\documentclass[11pt]{amsart}
\usepackage{amsxtra}
\usepackage{amssymb}
\usepackage{enumerate}
\addtolength{\topmargin}{-1.1cm}
\addtolength{\textheight}{2cm}
\addtolength{\evensidemargin}{-1.2cm}
\addtolength{\oddsidemargin}{-1.2cm}
\addtolength{\textwidth}{2.4cm}
\font\wasy=wasy10 at 10pt
\theoremstyle{plain}

\newcommand{\cleqn}{\setcounter{equation}{0}}
\newcommand{\clth}{\setcounter{theorem}{0}}
\newcommand {\sectionnew}[1]{\section{#1}\cleqn\clth}

\newtheorem{theorem}{Theorem}[section]
\newtheorem{lemma}[theorem]{Lemma}
\newtheorem{definition-theorem}[theorem]{Definition-Theorem}
\newtheorem{proposition}[theorem]{Proposition}
\newtheorem{corollary}[theorem]{Corollary}

\theoremstyle{definition}
\newtheorem{remark}[theorem]{Remark}
\newtheorem{definition}[theorem]{Definition}
\newtheorem{example}[theorem]{Example}
\newtheorem*{remark*}{Remark}

\newtheorem{observation}[theorem]{Observation}
\newcommand \KK {{\mathbb K}}

\newcommand \Zset {{\mathbb Z}}


\newcommand \de {\delta}
\newcommand \al {\alpha}

\newcommand \la {\lambda}

\newcommand \sig {\sigma}




\DeclareMathOperator \Fract { {\mathrm{Fract}} }

\newcommand\kx{\KK^*}

\newcommand\HH{{\mathcal{H}}}
\newcommand\xh{X(\HH)}
\DeclareMathOperator \Spec {Spec}

\newcommand \Znn {\Zset_{\ge 0}}

\newcommand \Rhat {\widehat{R}}

\newcommand\Hspec{\HH{\text{-}}\Spec}
\newcommand{\gc}{ [ \hspace{-0.65mm} [}
\newcommand{\dc}{]  \hspace{-0.65mm} ]}

\DeclareMathOperator\hgt{ht}
\DeclareMathOperator\pht{P-ht}

\DeclareMathOperator\GK{GKdim}

\newcommand\hfrak{\mathfrak{h}}
\DeclareMathOperator\Lie{Lie}
\DeclareMathOperator \Pspec {P{.}Spec}
\newcommand \HPspec {\HH\text{-}{\Pspec}\,}

\newcommand\E{\mathcal{E}}
\DeclareMathOperator\Kdim{K{.}dim}

\begin{document}

\title{Poisson catenarity in Poisson nilpotent algebras}

\author[K. R. Goodearl]{K. R. Goodearl}
\address{
Department of Mathematics \\
University of California\\
Santa Barbara, CA 93106 \\
U.S.A.
}
\email{goodearl@math.ucsb.edu}

\author[S. Launois]{S. Launois}
\address{
School of Mathematics, Statistics and Actuarial Science \\
University of Kent \\
Canterbury, Kent, CT2 7FS \\
 UK
}
\email{S.Launois@kent.ac.uk}

\begin{abstract}
We prove that for the iterated Poisson polynomial rings known as Poisson nilpotent algebras (or Poisson-CGL extensions), the Poisson prime spectrum is catenary, i.e., all saturated chains of inclusions of Poisson prime ideals between any two given Poisson prime ideals have the same length.
\end{abstract}

\subjclass[2010]{Primary 17B63; Secondary 13C15}

\keywords{Poisson algebra, Poisson polynomial ring, Poisson-Ore extension, Poisson nilpotent algebra, Poisson-CGL extension, Poisson prime ideal, Poisson prime spectrum, catenary}

\thanks{The research of the first named author was supported
by US National Science Foundation grant DMS-1601184. That of the second named author was supported by EPSRC grant EP/N034449/1.}

\maketitle


\sectionnew{Introduction}

The aim of this paper is to study the Poisson prime spectra of Poisson algebras. More precisely,  we focus on the catenary property -- that all saturated chains of inclusions of Poisson prime ideals between any two fixed Poisson prime ideals have the same length -- for the large class of Poisson algebras called Poisson nilpotent algebras. Many such algebras arise as semiclassical limits of quantum algebras, e.g., coordinate rings of matrix varieties or Schubert cells with natural Poisson structures. See \cite[Section 2]{KGSL} and \cite{EL}, for instance, for a number of examples.

Many properties of quantum algebras are known, or are conjectured to be, reflected in parallel properties of the Poisson algebras that arise as their semiclassical limits. Catenarity has been proved for the prime spectra of many quantized coordinate rings and related algebras, and the authors recently proved it for quantum nilpotent algebras \cite{KGSL2}. The parallel result, for the Poisson prime spectra of Poisson nilpotent algebras, is established here.

In order to provide further detail, a few definitions are in order. 

\subsection{Poisson nilpotent algebras}

The algebras just named are iterated Poisson polynomial algebras $A[x;\sig,\de]_p$ (see Definition \ref{Ppoly}), meaning that $A$ is a Poisson algebra and $A[x;\sig,\de]_p$ denotes a polynomial ring $A[x]$ equipped with a Poisson bracket that extends the one on $A$ and satisfies
$$
\{x,a\} = \sig(a) x + \de(a) \qquad \forall\, a\in A,
$$
where $\sig$ and $\de$ are suitable derivations on $A$. Given an iterated Poisson polynomial algebra
\begin{equation}  \label{itPOre}
R := \KK [x_1]_p [x_2;\sig_2,\de_2]_p \cdots [x_N; \sig_N,\de_N]_p \,,
\end{equation}
we set
$$
R_k := \KK[x_1,\dots,x_k] = \KK [x_1]_p [x_2;\sig_2,\de_2]_p \cdots [x_k; \sig_k,\de_k]_p 
$$
for $k \in \gc0,N\dc$. In particular, $R_0 = \KK$ and $R_1 = \KK[x_1]_p = \KK[x_1; 0,0]_p$.

\begin{definition} An iterated Poisson polynomial algebra $R$ as in \eqref{itPOre} is called a \emph{Poisson nilpotent algebra} or a \emph{Poisson-CGL extension} \cite[Definition 5.1]{GY.pcluster} if it is equipped with a rational action of an algebraic torus $\HH$ by Poisson algebra automorphisms such that
\begin{enumerate}
\item[(i)] The elements $x_1,\dots,x_N$ are $\HH$-eigenvectors.
\item[(ii)] For every $k \in \gc2,N\dc$, the map $\de_k$ on $R_{k-1}$ is locally nilpotent.
\item[(iii)] For every $k \in \gc1,N\dc$, there exists $h_k \in \Lie\HH$ such that $(h_k{\cdot})|_{R_{k-1}} = \sig_k$ and the $h_k$-eigenvalue of $x_k$ is nonzero.
\end{enumerate}
\end{definition}

The main theorem of the paper is

\begin{theorem}  \label{mainthm}
If $R$ is a Poisson nilpotent algebra, then its Poisson prime spectrum $\Pspec R$ is catenary.
\end{theorem}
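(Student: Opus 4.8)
The plan is to establish catenarity by constructing a tightly controlled dimension function on the Poisson prime spectrum and showing that this function drops by exactly one across each covering relation. The classical strategy for proving catenarity in such iterated settings — going back to Gabber's work and its quantum analogue in \cite{KGSL2} — is to find a function $\partial \colon \Pspec R \to \Zset_{\ge 0}$ such that whenever $P \subsetneq Q$ is a saturated inclusion of Poisson primes (i.e., $Q$ covers $P$), we have $\partial(Q) = \partial(P) + 1$. Once such a function exists, any saturated chain from $P_0$ to $P_n$ forces $\partial(P_n) - \partial(P_0) = n$, so all saturated chains between fixed endpoints have the same length, which is precisely catenarity.

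The natural candidate for $\partial$ is a normalized height function built from the Poisson Krull dimension or the Gelfand--Kirillov dimension of the factor rings $R/P$. First I would record the $\HH$-stratification of $\Pspec R$: the rational torus action partitions the Poisson primes into finitely many $\HH$-strata indexed by the $\HH$-invariant Poisson primes, and each stratum $\HPspec R$ is homeomorphic to the Poisson prime spectrum of a Poisson Laurent polynomial ring (a "Poisson torus"), whose spectrum is well understood and classically catenary. The key structural input is that the Poisson-CGL hypotheses (ii) and (iii) let one build the chain of subalgebras $R_0 \subset R_1 \subset \cdots \subset R_N = R$ and track how Poisson primes behave under the deleting-derivations homomorphisms that interpolate between $R$ and the associated Poisson quantum affine space. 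This yields precise control of heights and of GK dimension of the factors within each stratum.

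The main obstacle, as in the quantum case, will be controlling the dimension jump as one passes \emph{between} strata rather than within a single stratum: within a stratum catenarity and the additivity of height are comparatively clean, but a covering relation $P \subsetneq Q$ may have $P$ and $Q$ lying in different $\HH$-strata, and there the naive height function need not be additive. The heart of the argument is therefore to produce a function that reconciles the two sources of dimension — the dimension of the stratum and the height within the stratum — into a single additive quantity. Concretely, I expect to define $\partial(P) = \GKdim(R) - \GKdim(R/P)$, or a closely related normalized height, and to prove the crucial formula
\begin{equation*}
\GKdim(R/P) = \GKdim(R/(P : \HH)) - \hgt\bigl(P / (P : \HH)\bigr)
\end{equation*}
relating the GK dimension of a factor to that of its $\HH$-core together with the height inside the corresponding stratum, where $(P : \HH)$ denotes the largest $\HH$-invariant Poisson prime contained in $P$.

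To carry this out I would proceed inductively on $N$, using the Poisson polynomial extension $R = R_{N-1}[x_N; \sig_N, \de_N]_p$. At each stage one analyzes the two types of Poisson primes of an extension $A[x;\sig,\de]_p$: those that contract to a Poisson prime $Q$ of $A$ with $x \notin Q[x;\sig,\de]_p$ (the "generic" primes, governed by the Poisson Laurent extension of $A/Q$) and those containing the image of $x$. Tracking GK dimension and height through these two cases — and verifying that the extension adds exactly one to GK dimension and that heights add correctly across the contraction maps — reduces the catenarity of $\Pspec R$ to that of $\Pspec R_{N-1}$ together with the known catenarity of the Poisson tori appearing in each stratum. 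The delicate bookkeeping is to confirm that the local normality and the local height-additivity survive the semiclassical analogue of the deleting-derivations algorithm, so that the additive function $\partial$ is well defined globally and drops by exactly one across every covering relation, yielding the theorem.
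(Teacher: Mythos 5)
There is a genuine gap, and it sits exactly at the point you defer to ``delicate bookkeeping.'' Your entire strategy rests on the claim that for a covering relation $P \subsetneq Q$ in $\Pspec R$ one has $\GKdim (R/P) - \GKdim (R/Q) = 1$. Since $\GKdim$ agrees with Krull dimension for affine commutative algebras and $\Spec R$ is catenary with height additive against $\Kdim$ of the quotient, this claim is equivalent to asserting that a saturated chain in $\Pspec R$ remains saturated in $\Spec R$, i.e.\ that no ordinary primes can be interpolated between $P$ and $Q$. That is precisely the content that has to be proved, and it is not a formal consequence of $R$ being an iterated Poisson polynomial ring with a stratified torus action: Example \ref{fromBellSig} exhibits an iterated Poisson polynomial ring $\KK[x,y,z,w]$ in which the Poisson prime $Rx+Ry$ has Poisson height $1$ but ordinary height $2$, so the GK drop across a Poisson covering relation is $2$, no additive function $\partial$ of the kind you describe exists, and $\Pspec$ is in fact not catenary there. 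The Poisson-CGL hypotheses must therefore enter through a concrete mechanism, and your proposal never supplies one. The mechanism in the paper is Poisson-normal separation: one shows by induction along the tower, using the Poisson Cauchon map $\theta(a) = \sum_{l} \frac{1}{l!}(-1/\la_\circ)^l \de^l(a) X^{-l}$ to manufacture Poisson-normal $\HH$-eigenvectors (Proposition \ref{firstPnormal}, Theorem \ref{PnormalHsep.AtoR}), that $Q/P$ always contains a nonzero Poisson-normal element $c$ of $R/P$; then $Q$ is minimal over $Rc+P$ and Krull's Principal Ideal Theorem gives $\hgt(Q/P)=1$ (Theorem \ref{saturatedinPspecvsSpec}), after which catenarity of $\Spec$ of the affine commutative ring $R$ finishes the argument (Corollary \ref{affine+Pnormsep>Pcat}).

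A secondary point: the $\HH$-stratification you invoke is indeed used in the paper, but not to compare dimensions across strata; it is used in Theorem \ref{HPnormsep>Pnormsep} to reduce Poisson-normal separation for arbitrary pairs $P \subsetneq Q$ to the case of $\HH$-stable pairs, by writing Poisson-central elements of the stratum algebra $R_J$ as fractions whose denominators are Poisson-normal $\HH$-eigenvectors. The height formula $\pht P + \GKdim R/P = \GKdim R$ that underlies your candidate $\partial$ is true for Poisson-CGL extensions, but in the paper it appears as a \emph{consequence} of normal separation (Corollary \ref{Pht=ht} and the remark following it), not as an ingredient. To salvage your plan you would have to prove exactly the normal separation statement first, and at that point the dimension function becomes superfluous.
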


Here $\Pspec R$ consists of the ideals of $R$ which are both Poisson ideals and prime ideals (see Definition \ref{P.misc} and Lemma \ref{Pprimeinfo}).

\subsection{Notation and conventions} 
Throughout, all algebras will be unital commutative algebras over a fixed base field $\KK$ of characteristic zero.

\sectionnew{Background and basics}

Recall that a \emph{Poisson algebra} (over $\KK$) is a commutative $\KK$-algebra equipped with a Poisson bracket $\{-,-\}$, that is, a Lie bracket which is also a derivation in each variable (for the associative product). 

\begin{definition}  \label{Ppoly}
Let $A$ be a Poisson algebra. A \emph{Poisson derivation} on $A$ is a $\KK$-linear map $\sig$ on $A$ which is a derivation with respect to both the
multiplication and the Poisson bracket, that is, 
$$
\sig(ab)= \sig(a)b+
a\sig(b) \qquad\text{and}\qquad \sig(\{a,b\})= \{\sig(a),b\}+ \{a,\sig(b)\} \qquad \forall\, a,b\in A.
$$
Suppose that $\delta$ is a \emph{Poisson $\sigma$-derivation} on $A$ (in the terminology of \cite[\S1.1.2]{Dumas}), meaning that $\delta$ is a $\KK$-linear derivation and
\begin{equation}  \label{delta.cond}
\delta(\{a,b\})= \{\delta(a),b\}+ \{a,\delta(b)\}+ \sig(a)\delta(b)-
\delta(a)\sig(b) \qquad \forall\, a,b\in A.
\end{equation}
By \cite[Theorem 1.1]{Oh} (after replacing $\sig$ with $-\sig$), the Poisson structure on $A$ extends
uniquely to a Poisson algebra structure on the polynomial ring $A[x]$
such that
$$
\{x,a\}= \sig(a)x+ \delta(a) \qquad \forall\, a\in A.
$$
We write $A[x]=
A[x;\sig,\delta]_p$ to denote this situation, and we refer to $A[x;\sig,\delta]_p$ as a
\emph{Poisson polynomial algebra} or a \emph{Poisson-Ore extension}. The Poisson structure on $A[x]$ extends uniquely to the Laurent polynomial ring $A[x^{\pm1}]$. We write $A[x^{\pm1};\sig,\delta]_p$ for this Poisson algebra and refer to it as a \emph{Poisson Laurent polynomial algebra}.

In either of the above cases, we omit $\delta$ from the notation if it is zero,
that is, we write $A[x;\sig]_p$ and $A[x^{\pm1};\sig]_p$ for
$A[x;\sig,0]_p$ and $A[x^{\pm1};\sig,0]_p$.

The converse part of \cite[Theorem 1.1]{Oh} will also be needed: If a polynomial ring $A[x]$ supports a Poisson bracket such that $A$ is a Poisson subalgebra and $\{x,A\} \subseteq A+Ax$, then $A[x]= A[x;\sig,\delta]_p$ for suitable $\sig$ and $\delta$.
\end{definition}

\begin{definition}  \label{P.misc}
Let $R$ be a Poisson algebra. The Poisson bracket on $R$ induces unique Poisson brackets on any quotient of $R$ modulo  a \emph{Poisson ideal}, meaning an ideal $I$ such that $\{R,I\} \subseteq I$, and on any localization of $R$ with respect to a multiplicative set (e.g.,
\cite[Proposition 1.7]{Loose}). 

A \emph{Poisson automorphism} of $R$ is any algebra automorphism which preserves the Poisson bracket. We use the term \emph{Poisson action} to refer to an action of a group on $R$ by Poisson automorphisms. 

The \emph{Poisson center} of $R$ is the set $Z_p := \{ z \in R \mid \{z,-\} \equiv 0 \}$, which is a subalgebra of $R$. A \emph{Poisson-prime} ideal is any proper Poisson ideal $P$ of $R$ such that $(IJ \subseteq P \implies I \subseteq P \; \text{or} \; J \subseteq P)$ for all Poisson ideals $I$ and $J$ of $R$.
\end{definition}

\begin{lemma}  \label{Pprimeinfo}  \cite[Lemma 1.1]{GpDixMo} 
Let $R$ be a Poisson algebra.

{\rm(a)} Every prime ideal minimal over a Poisson ideal is a Poisson ideal.

{\rm(b)} If $R$ is noetherian, every Poisson-prime ideal of $R$ is prime.
\end{lemma}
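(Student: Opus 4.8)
The two parts rest on the same observation: for each $r\in R$ the map $\de_r:=\{r,-\}$ is a $\KK$-linear derivation of the commutative algebra $R$, and to say that an ideal $I$ is a Poisson ideal is precisely to say that $I$ is stable under every $\de_r$. Thus both statements are instances of facts about ideals stable under a family of derivations of a commutative $\mathbb{Q}$-algebra, and the plan is to extract the derivation content first and then feed it back into the Poisson setting.

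For part (a), fix a Poisson ideal $I$ and a prime $P$ minimal over $I$; I must show $\de_r(P)\subseteq P$ for every $r$. The first step is a Seidenberg-type lemma: if $J$ is any ideal with $\de_r(J)\subseteq J$, then $\de_r(\sqrt J)\subseteq \sqrt J$. I would prove this by the standard characteristic-zero induction, showing that $a^n\in J$ forces $a^{n-k}\de_r(a)^{2k}\in J$ for all $k\le n$ — each step applies $\de_r$, multiplies by $\de_r(a)$, and cancels the surviving term using the inductive hypothesis together with invertibility of the integer coefficient $n-k$ — so that $\de_r(a)^{2n}\in J$ and hence $\de_r(a)\in\sqrt J$. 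Granting this, $\sqrt I$ is again a Poisson ideal. The second step localizes: passing to $R_P$, the extended Poisson bracket makes $IR_P$ a Poisson ideal, and since $P$ is minimal over $I$ the ideal $PR_P$ is the unique prime over $IR_P$, so $PR_P=\sqrt{IR_P}$. By the first step $PR_P$ is stable under the derivation induced by $\de_r$ on $R_P$, and contracting back to $R$ gives $\de_r(P)\subseteq P$, as required.

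For part (b), let $P$ be a Poisson-prime ideal of the Noetherian algebra $R$; replacing $R$ by $R/P$ (a Noetherian Poisson algebra, since $P$ is a Poisson ideal), I may assume $P=0$, so that the zero ideal is Poisson-prime, and I must show $R$ is a domain. I would first record that finite products and finite intersections of Poisson ideals are again Poisson, from $\{r,IJ\}\subseteq\{r,I\}J+I\{r,J\}$ and the immediate computation for intersections. The nilradical $N=\sqrt 0$ is a Poisson ideal by part (a), being the intersection of the (finitely many) minimal primes, each of which is Poisson, and $N$ is nilpotent because $R$ is Noetherian; choosing $k$ minimal with $N^k=0$, the identity $N\cdot N^{k-1}=0$ together with Poisson-primeness of $0$ forces $N=0$, so $R$ is reduced. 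Now the minimal primes $Q_1,\dots,Q_m$ are Poisson ideals by (a) and satisfy $Q_1\cdots Q_m\subseteq Q_1\cap\cdots\cap Q_m=N=0$; since this is a product of Poisson ideals lying in the Poisson-prime ideal $0$, an induction on $m$ shows some $Q_i=0$, and that $Q_i$ is prime, so $0$ is prime and $R$ is a domain.

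The main obstacle is entirely in part (a): getting the characteristic-zero Seidenberg computation correct and, more importantly, arranging for $P$ to become the unique minimal prime over $I$ \emph{without} any Noetherian hypothesis — which is exactly what the localization at $P$ achieves, turning $PR_P$ into the radical of $IR_P$. Once (a) is available, part (b) is a routine manipulation of the now finitely many minimal primes against the defining property of a Poisson-prime ideal.
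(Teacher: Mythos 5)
Your proof is correct, and it is essentially the argument the paper relies on: the paper gives no proof of this lemma, citing \cite[Lemma 1.1]{GpDixMo}, and the proof there runs exactly along your lines --- for (a), the characteristic-zero Seidenberg computation showing radicals of $\de$-stable ideals are $\de$-stable (your exponent bookkeeping $a^{n-k}\de_r(a)^{2k}\in J$ checks out, with the coefficient $n-k$ nonzero at every step of the induction), combined with localization at $P$ to make $PR_P$ the radical of $IR_P$ without any noetherian hypothesis; for (b), passing to $R/P$ and playing the finitely many minimal primes, Poisson by (a), against Poisson-primeness of the zero ideal. No gaps.
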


\begin{remark*}
If $R$ is a noetherian Poisson algebra, Lemma \ref{Pprimeinfo}(b) implies that the Poisson-prime ideals
 in $R$ are precisely the ideals which are both
Poisson ideals and prime ideals; in that case, the hyphen in the term 
``Poisson-prime'' becomes unnecessary.
\end{remark*}

The \emph{Poisson prime spectrum} of a Poisson algebra $R$ is the set $\Pspec R$ of all Poisson-prime ideals of $R$, equipped with the natural Zariski topology (where the closed sets are the sets $\{ P \in \Pspec R \mid P \supseteq I \}$ for Poisson ideals $I$ of $R$).

Suppose $R$ is a Poisson algebra and we have a Poisson action of a group $\HH$ on $R$. One defines \emph{$\HH$-prime} and \emph{$\HH$-Poisson-prime} ideals of $R$ in the same manner as Poisson-prime ideals. Namely, an $\HH$-prime (resp., $\HH$-Poisson-prime) ideal is any proper $\HH$-stable (resp., $\HH$-stable Poisson) ideal $P$ of $R$ such that $(IJ \subseteq P \implies I \subseteq P \; \text{or} \; J \subseteq P)$ for all $\HH$-stable (resp., $\HH$-stable Poisson) ideals $I$ and $J$ of $R$. We denote the sets of $\HH$-prime and $\HH$-Poisson-prime ideals of $R$ by $\Hspec R$ and $\HPspec R$, respectively.

These concepts are related by an analog of Lemma \ref{Pprimeinfo} when $\HH$ is an algebraic torus and the action of $\HH$ on $R$ is \emph{rational}, meaning that $R$ is spanned by $\HH$-eigenvectors whose $\HH$-eigenvalues $\HH \rightarrow \kx$ are morphisms of algebraic groups.

\begin{lemma}  \label{torusPprime}  \cite[Lemma 4.3]{GY.pcluster}
Let $R$ be a noetherian Poisson algebra, equipped with a rational Poisson action of a torus $\HH$. Then any prime ideal of $R$ minimal over an $\HH$-stable Poisson ideal is itself an $\HH$-stable Poisson ideal, and the $\HH$-Poisson-prime ideals of $R$ are exactly the $\HH$-stable, Poisson, prime ideals of $R$.
\end{lemma}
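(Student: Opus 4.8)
The plan is to prove Lemma \ref{torusPprime} by reducing the two assertions to their purely Poisson-theoretic analogues (Lemma \ref{Pprimeinfo}) together with standard facts about the rational action of a torus on a noetherian ring. The two claims are intertwined, so I would organize the argument around a single commutative-algebra observation: under a rational action of a torus $\HH$, every $\HH$-stable ideal is an intersection of its minimal primes, and the finite set of those minimal primes is permuted by $\HH$. Since $\HH$ is connected (a torus), its action on a finite set must be trivial, which forces each minimal prime over an $\HH$-stable ideal to be $\HH$-stable. This connectedness-of-the-torus argument is the engine that upgrades ``minimal prime'' to ``$\HH$-stable minimal prime,'' and it is where I expect the only genuine subtlety to lie.

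First I would establish the first assertion: if $P$ is minimal over an $\HH$-stable Poisson ideal $I$, then $P$ is both Poisson and $\HH$-stable. That $P$ is a Poisson ideal is immediate from Lemma \ref{Pprimeinfo}(a), since $P$ is in particular minimal over the Poisson ideal $I$. For $\HH$-stability, I would invoke the connectedness argument above: the finite set of primes minimal over $I$ is permuted by $\HH$ because $I$ is $\HH$-stable, and rationality of the action means this permutation representation factors through a morphism of algebraic groups from the connected group $\HH$ to a finite group, hence is trivial. Thus each minimal prime over $I$, and in particular $P$, is $\HH$-stable. This handles the first sentence of the lemma.

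Next I would turn to the characterization of $\HH$-Poisson-prime ideals. One inclusion is formal: any $\HH$-stable, Poisson, prime ideal $P$ is automatically $\HH$-Poisson-prime, since the primeness condition for arbitrary ideals certainly implies the weaker condition restricted to $\HH$-stable Poisson ideals. For the reverse inclusion, suppose $P$ is $\HH$-Poisson-prime; I must show $P$ is prime. Here I would use that $R$ is noetherian so that $P$, being a proper $\HH$-stable Poisson ideal, has finitely many minimal primes $P_1,\dots,P_m$, each of which is $\HH$-stable and Poisson by the first part of the lemma applied to $I=P$. Since $P=\bigcap_i P_i$ with each $P_i$ an $\HH$-stable Poisson ideal, the $\HH$-Poisson-primeness of $P$ (applied inductively to the product $P_1\cdots P_m\subseteq P$) forces $P\supseteq P_j$ for some $j$, whence $P=P_j$ is prime.

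The main obstacle, as flagged, is the connectedness step that shows minimal primes over an $\HH$-stable ideal are individually $\HH$-stable; everything else is a routine reduction to Lemma \ref{Pprimeinfo} and to the noetherian finiteness of minimal primes. I would want to be careful that the permutation action of $\HH$ on the finite set of minimal primes is genuinely induced by the rational action (so that it defines a morphism of algebraic groups), and that a torus has no nontrivial continuous homomorphism to a finite group; both are standard, but they are the precise points on which the argument turns. It is also worth noting that this lemma is essentially the Poisson analogue of the classical Goodearl--Letzter result for rational torus actions on noncommutative noetherian rings, so the same overall strategy applies with the Poisson condition tracked alongside the $\HH$-stability condition throughout.
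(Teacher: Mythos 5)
The paper does not prove this lemma; it is quoted from \cite[Lemma 4.3]{GY.pcluster}, so there is no internal proof to compare against. Your strategy is the standard one (the Poisson analogue of \cite[Proposition II.2.9]{BrGo}, which is exactly how the cited source argues), and the overall structure --- Lemma \ref{Pprimeinfo}(a) for the Poisson part, a connectedness argument for $\HH$-stability of minimal primes, then the semiprime/product reduction for the characterization of $\HH$-Poisson-primes --- is sound. Two points need repair, though neither is fatal. First, your opening claim that ``every $\HH$-stable ideal is an intersection of its minimal primes'' is false: an $\HH$-stable ideal need not be radical (e.g.\ $(x^2)$ in $\KK[x]$ with the torus scaling $x$). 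Fortunately you never need it; all you use is that the primes minimal over an $\HH$-stable ideal $I$ form a finite set permuted by $\HH$. Second, in the last step the containment $P_1\cdots P_m\subseteq P$ is likewise not automatic for the same reason; you only get $(P_1\cdots P_m)^k\subseteq P$ for some $k$ from noetherianness, but that still suffices to run the induction against $\HH$-Poisson-primeness and conclude $P_j\subseteq P$ for some $j$. Finally, on the point you rightly flag as the crux: the cleanest way to make the permutation argument rigorous is to show that the stabilizer $\{h\in\HH\mid h\cdot P_1=P_1\}$ is a Zariski-closed subgroup of finite index (closedness uses rationality, since $h\mapsto h\cdot a$ takes values in a finite-dimensional $\HH$-stable subspace for each generator $a$ of $P_1$), and a closed finite-index subgroup of the connected group $\HH$ is all of $\HH$. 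With those adjustments your argument is a correct proof of the cited lemma.
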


Lemma \ref{torusPprime} allows a shortening of terminology: In the setting of the lemma,
\begin{equation}
\label{pHprime}
\HPspec R = \{ \text{Poisson} \; \HH\text{-prime ideals of} \; R\} = \Hspec R \cap \Pspec R \cap \Spec R.
\end{equation} 
This is immediate from the lemma together with the fact that the $\HH$-prime ideals of $R$ coincide with the $\HH$-stable prime ideals \cite[Proposition II.2.9]{BrGo}.

\begin{proposition}  \label{h.action}
Let $R$ be a $\KK$-algebra equipped with a rational action of a $\KK$-torus $\HH$, and set $\hfrak := \Lie \HH$.

{\rm(a)} The differential of the $\HH$-action gives an action of $\hfrak$ on $R$ by derivations. If $R$ is a Poisson algebra and the $\HH$-action is a Poisson action, then $\hfrak$ acts on $R$ by Poisson derivations.

{\rm(b)} The $\hfrak$-action on $R$ commutes with the $\HH$-action.

{\rm(c)} The $\hfrak$-eigenspaces in $R$ coincide with the $\HH$-eigenspaces.

{\rm(d)} Suppose $R'$ is another $\KK$-algebra, equipped with a rational action of the torus $\HH$ by $\KK$-algebra automorphisms. Then any $\HH$-equivariant $\KK$-algebra homomorphism $\phi : R \rightarrow R'$ is also $\hfrak$-equivariant.
\end{proposition}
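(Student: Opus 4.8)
The plan is to reduce everything to the weight-space decomposition of $R$ as a rational $\HH$-module. Writing $X(\HH)$ for the character group of $\HH$, rationality of the action gives $R = \bigoplus_{\chi \in X(\HH)} R_\chi$, where $R_\chi = \{ a \in R \mid g \cdot a = \chi(g) a \ \forall g \in \HH \}$ is the $\HH$-eigenspace of weight $\chi$. Because $\HH$ acts by algebra automorphisms one has $R_\chi R_\psi \subseteq R_{\chi \psi}$, and when $R$ is Poisson and the action is Poisson the same argument applied to the bracket gives $\{ R_\chi, R_\psi \} \subseteq R_{\chi\psi}$: for $a \in R_\chi$, $b \in R_\psi$, and $g \in \HH$ one computes $g \cdot \{a,b\} = \{g\cdot a, g\cdot b\} = (\chi\psi)(g)\{a,b\}$. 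I would then pin down the differential of the $\HH$-action by the rule that each $h \in \hfrak$ acts on $R_\chi$ as the scalar $d\chi(h)$, where $d \colon X(\HH) \to \hfrak^*$ denotes the differential map on characters; since the action is rational (hence locally finite), this operator is well defined on all of $R$. Two properties of $d$ will be used repeatedly: it is additive, $d(\chi\psi) = d\chi + d\psi$, and, because $\charr \KK = 0$, it is injective on $X(\HH)$.

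With this in hand, parts (a) and (b) become immediate checks on homogeneous elements. For (a), given $a \in R_\chi$ and $b \in R_\psi$, additivity of $d$ yields
$$
h\cdot(ab) = d(\chi\psi)(h)\,ab = \bigl(d\chi(h)+d\psi(h)\bigr)ab = (h\cdot a)\,b + a\,(h\cdot b),
$$
so each $h$ is a derivation; the identical computation with $\{a,b\} \in R_{\chi\psi}$ in place of $ab$ shows $h$ is a Poisson derivation in the Poisson case. That $\hfrak \to \End R$ is a Lie-algebra map is automatic, since $\hfrak$ is abelian and all the operators $h\cdot$ are simultaneously diagonal in the weight decomposition. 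For (b), both $g \in \HH$ and $h \in \hfrak$ act on $R_\chi$ by scalars, namely $\chi(g)$ and $d\chi(h)$, which commute; hence $h\cdot(g\cdot a) = g\cdot(h\cdot a)$ on each $R_\chi$ and therefore on all of $R$.

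Part (c) is exactly where injectivity of $d$ enters. Each $R_\chi$ lies in the $\hfrak$-eigenspace $R^{d\chi} = \{ a \mid h\cdot a = d\chi(h)\,a\ \forall h \in \hfrak\}$, so the $\hfrak$-eigenspace for a weight $\mu \in \hfrak^*$ is $R^\mu = \bigoplus_{\chi \,:\, d\chi = \mu} R_\chi$; injectivity of $d$ forces at most one $\chi$ in each such sum, whence the $\hfrak$-eigenspaces are precisely the nonzero $\HH$-eigenspaces. For (d), $\HH$-equivariance of $\phi$ forces $\phi(R_\chi) \subseteq R'_\chi$: if $a \in R_\chi$ then $g\cdot\phi(a) = \phi(g\cdot a) = \chi(g)\phi(a)$ for all $g \in \HH$. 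Consequently, for $h \in \hfrak$ and $a \in R_\chi$, both $\phi(h\cdot a)$ and $h\cdot\phi(a)$ equal $d\chi(h)\phi(a)$, and $\hfrak$-equivariance follows by linearity.

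The one step I expect to require genuine care is the foundational identification in the first paragraph: realizing the differential of the rational $\HH$-action as the diagonal operator acting by $d\chi(h)$ on $R_\chi$, and recording that $d$ is additive and, crucially, injective in characteristic zero. Once that is in place, all four parts collapse to one-line verifications on weight vectors. The injectivity of $d$, which can fail in positive characteristic, is the single hypothesis doing real work, and it is exactly what powers part (c); everything else is formal consequence of the scalar action on weight spaces.
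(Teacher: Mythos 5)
Your proposal is correct and takes essentially the same route as the paper: decompose $R$ into $X(\HH)$-weight spaces under the rational action and let $\hfrak$ act diagonally by the differentials $d\chi$, with injectivity of $\chi \mapsto d\chi$ in characteristic zero driving part (c). The paper simply cites \cite[Section 1.2 and Lemmas 1.3, 1.4]{KGSL} for (a)--(c) and gives the same homogeneity argument for (d), so your writeup is just a self-contained version of the cited proof.
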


\begin{proof} (a)(b)(c) See \cite[Section 1.2 and Lemmas 1.3, 1.4]{KGSL}.

(d) Since $\phi$ is $\HH$-equivariant, it is a homogeneous map with respect to the $X(\HH)$-gradings on $R$ and $R'$. It follows from \cite[Lemma 1.3]{KGSL} that $\phi(h \cdot r) = h\cdot \phi(r)$ for all $h \in \hfrak$ and all $X(\HH)$-homogeneous elements $r\in R$, and thus $\phi$ is $\hfrak$-equivariant.
\end{proof}

Whenever we have a $\KK$-algebra $R$ equipped with a rational action of a $\KK$-torus $\HH$, we assume that $\Lie \HH$ is acting on $R$ by the differential of the $\HH$-action.

\sectionnew{Poisson catenarity}

We address Poisson catenarity of general Poisson algebras in this section, beginning with an example for which Poisson catenarity fails. We then establish two general theorems. The first provides sufficient conditions for the Poisson prime spectrum of a Poisson algebra to be catenary, and the second provides a reduction of the key condition in the first theorem.

The following is a Poisson adaptation of an example of Bell and Sigur{\wasy\char107}sson \cite[Example 2.10]{BeSi}.

\begin{example}  \label{fromBellSig}
Let $R$ be the polynomial ring $\KK[x,y,z,w]$. There is a Poisson bracket on $R$ such that
\begin{align*}
\{x,y\} &= 0  &\qquad \{x,z\} &= 0  &\qquad \{y,z\} &= 0  \\
\{w,x\} &= 2yz  &\qquad \{w,y\} &= x+y^2  &\{w,z\} &= 0.
\end{align*}
The restriction of $\{w,-\}$ to $A := \KK[x,y,z]$ equals the derivation $\delta := 2yz\frac{\partial}{\partial x} + (x+y^2) \frac{\partial}{\partial y}$ and $R = A[w; 0,\de]_p$. 

By \cite[Example 2.10]{BeSi}, the $\delta$-invariant prime ideal $P := Ax+Ay$ of $A$ has $\delta$-height $1$, that is, $P$ does not properly contain any nonzero $\delta$-invariant prime ideals of $A$. We claim that the Poisson prime ideal $RP = Rx+Ry$ has height $1$ in $\Pspec R$, that is, $RP$ does not properly contain any nonzero Poisson prime ideals of $R$. 

Suppose, to the contrary, that $RP$ properly contains a nonzero Poisson prime ideal $Q$. Then $Q\cap A$ is a $\de$-invariant prime ideal of $A$ properly contained in $P$, whence $Q \cap A = 0$. Consider the polynomial ring $T := F[w]$ where $F := \Fract A$. As a localization of $R$, the ring $T$ is a Poisson algebra, and $TQ$ is a proper nonzero Poisson ideal of $T$. Note that the Poisson bracket vanishes on $F\times F$, since it vanishes on $A\times A$. Let
$$
t = t_0 + t_1 w + \cdots + t_n w^n
$$
be a nonzero polynomial in $TQ$ of minimal degree, and note that $n>0$. Since we may replace $t$ by $t_n^{-1}t$, there is no loss of generality in assuming that $t_n = 1$. Observe that
$$
\{x,t\} = n w^{n-1} \{x,w\} + [\text{lower terms}] = - 2n yz w^{n-1} + [\text{lower terms}],
$$
so that $\deg \{x,t\} = n-1$. Since $\{x,t\} \in TQ$, this contradicts the minimality of $n$.

Therefore $RP$ has height $1$ in $\Pspec R$, as claimed. Consequently, the chain
$$
0 < Rx+Ry < Rx+Ry+Rz
$$
in $\Pspec R$ is saturated. However, there is also a saturated chain
$$
0 < Rz < Rx+Rz < Rx+Ry+Rz
$$
in $\Pspec R$. Therefore $\Pspec R$ is not catenary.
\end{example}

There are two general theorems in the literature that have been used to establish catenarity for the prime spectra of various quantum algebras -- \cite[Theorem 1.6]{KGTL} and \cite[Theorem 0.1]{YZ}. A key hypothesis in both theorems is \emph{normal separation}, a condition that requires a suitable supply of normal elements in prime factor rings. An analogous condition, as follows, is important in the Poisson setting.

\begin{definition}  \label{P.normal}
Let $R$ be a Poisson algebra.

An element $c\in R$ is called \emph{Poisson-normal} provided $\{c,R\} \subseteq Rc$, that is, $Rc$ is a Poisson ideal of $R$.

The set $\Pspec R$ is said to have \emph{Poisson-normal separation} provided that for any pair of distinct comparable prime ideals $P \subsetneq Q$ in $R$, the factor $Q/P$ contains a nonzero Poisson-normal element of $R/P$.
\end{definition} 

\begin{theorem}  \label{saturatedinPspecvsSpec}
Let $R$ be a noetherian Poisson algebra such that $\Pspec R$ has Poisson-normal separation. Then any saturated chain in $\Pspec R$ is also saturated as a chain in $\Spec R$.
\end{theorem}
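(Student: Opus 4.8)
The plan is to reduce the statement to a single saturated step and then combine Poisson-normal separation with Krull's principal ideal theorem. First I would note that a finite chain $P_0 \subsetneq P_1 \subsetneq \cdots \subsetneq P_n$ in $\Pspec R$ is saturated precisely when, for each $i$, no Poisson prime lies strictly between $P_{i-1}$ and $P_i$; saturation in $\Spec R$ is the analogous step-by-step condition, but now forbidding \emph{any} prime strictly between consecutive terms. (Since $R$ is noetherian, every prime has finite height, so all chains under consideration are finite.) Hence it suffices to prove the following local assertion: if $P \subsetneq Q$ are Poisson primes of $R$ with no Poisson prime strictly between them, then there is no prime ideal whatsoever strictly between $P$ and $Q$.

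To establish this, I would pass to the quotient $\bar R := R/P$, which is a noetherian Poisson domain since $P$ is a prime Poisson ideal. Writing $\bar Q := Q/P$, the hypothesis becomes that $0 \subsetneq \bar Q$ is saturated in $\Pspec \bar R$, and the goal reduces to showing $\hgt \bar Q = 1$ in $\Spec \bar R$. Applying Poisson-normal separation to the pair $P \subsetneq Q$, the factor $\bar Q$ contains a nonzero Poisson-normal element $c$ of $\bar R$. By the definition of Poisson-normal, $\bar R c$ is a Poisson ideal of $\bar R$, and it is a nonzero ideal contained in $\bar Q$.

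Next I would choose a prime $\bar Q'$ minimal over $\bar R c$ with $\bar Q' \subseteq \bar Q$; such a prime exists because $\bar Q$ itself is a prime containing $\bar R c$. By Lemma \ref{Pprimeinfo}(a), $\bar Q'$ is a Poisson ideal, hence a Poisson prime, and it is nonzero since $0 \ne c \in \bar Q'$. The saturation hypothesis then forces $\bar Q' = \bar Q$. Finally, because $\bar R$ is a domain and $\bar Q'$ is minimal over the principal ideal $\bar R c$, Krull's principal ideal theorem gives $\hgt \bar Q' \le 1$; being nonzero, $\bar Q' = \bar Q$ has height exactly $1$. Thus no prime lies strictly between $0$ and $\bar Q$, which is precisely saturation of $P \subsetneq Q$ in $\Spec R$.

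The argument is largely a matter of assembling standard commutative-algebra tools, so I do not expect a serious obstacle; the one point requiring care is the interplay of the two saturation notions. The crux is that Poisson-normal separation delivers not merely some Poisson prime below $Q$, but one realized as a minimal prime over a principal Poisson ideal, so that Krull's theorem pins its height at $1$ while the Poisson saturation hypothesis simultaneously identifies it with $Q$ itself. Keeping the reduction to a single step clean, and invoking Lemma \ref{Pprimeinfo}(a) to ensure that the minimal prime over $\bar R c$ is indeed Poisson, are the places where one must remain attentive.
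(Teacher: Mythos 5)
Your proof is correct and follows essentially the same route as the paper: Poisson-normal separation produces $c$, the minimal prime over the resulting principal Poisson ideal is Poisson by Lemma \ref{Pprimeinfo}(a) and hence equals $Q$ by saturation, and Krull's principal ideal theorem pins the height at $1$. The only cosmetic difference is that you pass to $R/P$ while the paper works with $Rc+P$ in $R$ directly.
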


\begin{proof} We just need to show that if $P \subsetneq Q$ are Poisson prime ideals such that no Poisson prime ideals lie strictly between $P$ and $Q$, then no prime ideals lie strictly between $P$ and $Q$, that is, $\hgt(Q/P) = 1$.

By Poisson-normal separation, there exists $c\in Q \setminus P$ such that $c$ is Poisson-normal modulo $P$. Hence, $Rc+P$ is a Poisson ideal contained in $Q$. Let $Q' \subseteq Q$ be a prime ideal minimal over $Rc+P$. By Lemma \ref{Pprimeinfo}, $Q'$ is a Poisson ideal, and our assumption on $P \subsetneq Q$ implies that $Q'=Q$. Thus $Q$ is minimal over $Rc+P$. Krull's Principal Ideal Theorem therefore implies $\hgt(Q/P)=1$, as required.
\end{proof}

We refer to a Poisson algebra as \emph{affine} when the underlying associative algebra is affine (i.e., finitely generated as an algebra).

\begin{corollary}  \label{affine+Pnormsep>Pcat}
Let $R$ be an affine Poisson algebra. If $\Pspec R$ has Poisson-normal separation, then $\Pspec R$ is catenary.
\end{corollary}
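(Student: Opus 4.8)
The plan is to combine Theorem \ref{saturatedinPspecvsSpec} with the classical commutative-algebra fact that the prime spectrum of an affine commutative algebra over a field is catenary. Concretely, I would argue as follows. Since $R$ is affine commutative over $\KK$, a standard dimension-theoretic result (the catenary property for affine algebras over a field, via Noether normalization and the dimension formula $\hgt(Q/P) = \dim(R/P) - \dim(R/Q)$ for $P \subseteq Q$ in $\Spec R$) tells us that $\Spec R$ is catenary: all saturated chains of prime ideals between two fixed primes have the same length.

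Next I would take an arbitrary saturated chain
$$
P = P_0 \subsetneq P_1 \subsetneq \cdots \subsetneq P_m = Q
$$
in $\Pspec R$ between two fixed Poisson primes $P \subsetneq Q$. By Theorem \ref{saturatedinPspecvsSpec}, whose hypotheses hold because $R$ is noetherian (being affine) and $\Pspec R$ has Poisson-normal separation by assumption, this chain is also saturated as a chain in $\Spec R$. In other words, its length $m$ equals the length of a saturated chain in $\Spec R$ from $P$ to $Q$. Since $\Spec R$ is catenary, that common length is the single well-defined value $\hgt(Q/P)$, independent of the chosen chain. Hence every saturated chain in $\Pspec R$ from $P$ to $Q$ has the same length $\hgt(Q/P)$, which is exactly the statement that $\Pspec R$ is catenary.

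The argument is short because the real content has been front-loaded into Theorem \ref{saturatedinPspecvsSpec}. The only point requiring a little care — and the closest thing to an obstacle — is the appeal to catenarity of $\Spec R$ itself; I would cite this as the standard fact that every affine algebra over a field is catenary (equivalently, universally catenary), so that saturated chains between two primes in $\Spec R$ all share a common length. Once that is in hand, Poisson-normal separation is doing all the work of transferring saturatedness from the Poisson spectrum into the full spectrum, and the uniqueness of chain lengths descends for free.
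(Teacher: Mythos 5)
Your proposal is correct and follows essentially the same route as the paper: cite catenarity of $\Spec R$ for affine commutative algebras, and use Theorem \ref{saturatedinPspecvsSpec} to transfer saturatedness of chains from $\Pspec R$ to $\Spec R$. The extra identification of the common length as $\hgt(Q/P)$ is fine but not needed.
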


\begin{proof} Since $R$ is an affine commutative algebra, $\Spec R$ is catenary (e.g., \cite[Corollary 13.6]{Eis}). Given $P \subsetneq Q$ in $\Pspec R$, any two saturated chains of Poisson prime ideals from $P$ to $Q$ are also saturated as chains of ordinary prime ideals by Theorem \ref{saturatedinPspecvsSpec}, so they have the same length due to catenarity of $\Spec R$.
\end{proof}

\begin{definition}  \label{Pheight}
The \emph{Poisson height} of a Poisson prime ideal $P$ in a Poisson algebra $R$, denoted $\pht P$, is the height of $P$ within the poset $\Pspec R$, that is, the supremum of the lengths of all chains of Poisson prime ideals in $R$ descending from $P$.
\end{definition} 

\begin{corollary}  \label{Pht=ht}
Let $R$ be an affine Poisson algebra. If $\Pspec R$ has Poisson-normal separation, then $\pht P = \hgt P$ for all $P \in \Pspec R$.
\end{corollary}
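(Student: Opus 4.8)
The plan is to prove the two inequalities $\pht P \le \hgt P$ and $\hgt P \le \pht P$ separately, the first being purely formal and the second resting on Theorem~\ref{saturatedinPspecvsSpec} together with the catenarity of $\Spec R$. For the first inequality, I would observe that every chain of Poisson prime ideals descending from $P$ is in particular a chain of prime ideals descending from $P$; thus the supremum defining $\pht P$ is taken over a subfamily of the chains defining $\hgt P$, and $\pht P \le \hgt P$ follows at once.

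For the reverse inequality, I would first record that the minimal elements of $\Pspec R$ are exactly the minimal primes of $R$: since $0$ is a Poisson ideal, Lemma~\ref{Pprimeinfo}(a) shows that every minimal prime of $R$ is a Poisson prime, and conversely a minimal Poisson prime sits over a minimal prime, which is itself Poisson. Next, because $R$ is affine its Krull dimension is finite, so $\hgt P$ is attained by some saturated chain of primes descending from $P$; the bottom term of such a chain is necessarily a minimal prime $\mathfrak{p}$, and for this choice one checks that $\hgt(P/\mathfrak{p}) = \hgt P$.

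With $\mathfrak{p}$ fixed, I would then choose a maximal chain of Poisson prime ideals running from $\mathfrak{p}$ up to $P$; such a chain exists because $\mathfrak{p}$ and $P$ are comparable Poisson primes and all prime chains in $R$ have bounded length. This chain is saturated in $\Pspec R$, so Theorem~\ref{saturatedinPspecvsSpec} guarantees that it is saturated as a chain in $\Spec R$ as well. Since $\Spec R$ is catenary ($R$ being affine commutative, e.g.\ \cite[Corollary 13.6]{Eis}), every saturated chain of primes from $\mathfrak{p}$ to $P$ has length $\hgt(P/\mathfrak{p}) = \hgt P$; in particular this Poisson-prime chain has length $\hgt P$. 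As it is a chain of Poisson primes descending from $P$, we obtain $\pht P \ge \hgt P$, and combining the inequalities yields the desired equality.

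The argument is almost entirely formal once the preceding results are in hand, so I do not expect a serious obstacle; the one point needing care is the bookkeeping with minimal primes -- specifically, verifying that a \emph{single} minimal prime $\mathfrak{p}$ below $P$ can be selected so that $\hgt(P/\mathfrak{p})$ equals the global height $\hgt P$, and confirming that the saturated Poisson-prime chain from $\mathfrak{p}$ to $P$ genuinely realizes this value rather than a smaller height arising from a different minimal prime.
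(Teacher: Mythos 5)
Your proof is correct and follows essentially the same route as the paper's: both reduce to a minimal prime $\mathfrak{p}\subseteq P$ (Poisson by Lemma~\ref{Pprimeinfo}(a)), transfer a saturated Poisson-prime chain to a saturated prime chain via Theorem~\ref{saturatedinPspecvsSpec}, and conclude from catenarity of $\Spec R$. The only cosmetic difference is that you fix a single minimal prime realizing $\hgt P$ and argue the two inequalities separately, whereas the paper deduces $\pht(P/Q)=\hgt(P/Q)$ for every minimal prime $Q\subseteq P$ and takes suprema.
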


\begin{proof} Let $P \in \Pspec R$. Any minimal prime ideal $Q$ contained in $P$ is a Poisson prime by Lemma \ref{Pprimeinfo}(a), and any saturated chain from $P$ to $Q$ in $\Pspec R$ is also saturated in $\Spec R$ by Theorem \ref{saturatedinPspecvsSpec}. Since $\Pspec R$ and $\Spec R$ are both catenary, it follows that $\pht P/Q = \hgt P/Q$. Taking suprema over all such $Q$ yields $\pht P = \hgt P$.
\end{proof}

\begin{remark}  
In many quantum algebras $T$, catenarity of $\Spec T$ is accompanied by \emph{Tauvel's height formula}, which says that
$$
\hgt P + \GK T/P = \GK T
$$
for all prime ideals $P$ of $T$, assuming that $T$ is a prime ring.  An analogous Poisson version for an affine Poisson domain $R$ would say that
\begin{equation}  \label{P-Tauvel}
\pht P + \GK R/P = \GK R
\end{equation}
for all Poisson prime ideals $P$ of $R$. If $\Pspec R$ has Poisson-normal separation, then \eqref{P-Tauvel} follows from Corollary \ref{Pht=ht}, since $\GK = \Kdim$ for commutative affine algebras (e.g., \cite[Theorem 4.5]{KrLe}) and $\hgt P+ \Kdim R/P = \Kdim R$ for prime ideals $P$ in such algebras $R$ (e.g., \cite[Corollary 13.4]{Eis}).

A more ``fully Poisson" version of \eqref{P-Tauvel} would replace GK-dimension with \emph{Poisson GK-dimension} as defined in \cite[Definition 3.5]{PeSi}. However, the resulting equations would be the same as \eqref{P-Tauvel}, since, as is easily verified, Poisson GK-dimension agrees with GK-dimension on affine Poisson algebras.
\end{remark}

Our approach to the second main theorem of this section involves a stratification of the Poisson prime spectrum of a Poisson algebra.

\begin{definition}  \label{Hcore}
Let $R$ be a ring and $\HH$ a group acting on $R$ by ring automorphisms.
The \emph{$\HH$-core} of an ideal $I$ of $R$ is the ideal $(I:\HH) := \bigcap_{h\in\HH} h\cdot I$. That is, $(I:\HH)$ is the largest $\HH$-stable ideal of $R$ contained in $I$.
\end{definition}

\begin{lemma}  \label{H.Poisson.ideals}  \cite[Lemma 3.1]{GpDixMo}
Let $R$ be a noetherian Poisson algebra equipped with a rational Poisson action of an algebraic torus $\HH$.

{\rm(a)} If $I$ is a Poisson ideal of $R$, then $(I:\HH)$ is an $\HH$-stable Poisson ideal. 

{\rm(b)} $(P:\HH) \in \HPspec R$ for all $P \in \Pspec R$.
\end{lemma}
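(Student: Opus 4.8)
The plan is to prove both parts essentially from the definitions, exploiting that the $\HH$-core is by construction the largest $\HH$-stable ideal inside the given ideal, together with the fact that $\HH$ acts by \emph{Poisson} automorphisms. For part (a), I would first check that each conjugate $h\cdot I$ remains a Poisson ideal: since $h$ is a Poisson automorphism, for $r\in R$ and $a\in I$ one has $\{r, h\cdot a\} = h\cdot\{h^{-1}\cdot r, a\} \in h\cdot I$, because $\{h^{-1}\cdot r, a\}\in I$. Next I would note that an arbitrary intersection of Poisson ideals is again a Poisson ideal, as $\{R, \bigcap_{h} h\cdot I\} \subseteq \bigcap_{h} \{R, h\cdot I\} \subseteq \bigcap_{h} h\cdot I$. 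Hence $(I:\HH) = \bigcap_{h\in\HH} h\cdot I$ is a Poisson ideal. Its $\HH$-stability is immediate from Definition \ref{Hcore}: applying any $g\in\HH$ merely permutes the family $\{h\cdot I\}_{h\in\HH}$, so $g\cdot(I:\HH) = (I:\HH)$.

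For part (b), part (a) already gives that $(P:\HH)$ is an $\HH$-stable Poisson ideal, and it is proper since it is contained in the proper ideal $P$. Since $\HPspec R$ is by definition the set of $\HH$-Poisson-prime ideals, it remains to verify that $(P:\HH)$ is $\HH$-Poisson-prime, i.e.\ that $IJ\subseteq (P:\HH)$ forces $I\subseteq (P:\HH)$ or $J\subseteq (P:\HH)$ for all $\HH$-stable Poisson ideals $I,J$. The key move is to transfer the primeness of $P$ to its core: from $IJ\subseteq (P:\HH)\subseteq P$ and primeness of $P$ we obtain, say, $I\subseteq P$; then, crucially, the $\HH$-stability of $I$ gives $I = h\cdot I \subseteq h\cdot P$ for every $h\in\HH$, whence $I\subseteq \bigcap_{h} h\cdot P = (P:\HH)$. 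This establishes $(P:\HH)\in\HPspec R$ directly, and via Lemma \ref{torusPprime} and \eqref{pHprime} one sees that it is moreover a genuine prime ideal lying in $\Hspec R\cap\Pspec R\cap\Spec R$.

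The argument is largely formal, so I do not anticipate a serious obstacle; the one point requiring care is the interplay between the three notions of ideal (Poisson, $\HH$-stable, and prime). Concretely, one must verify that each conjugate $h\cdot I$ inherits the Poisson property from the hypothesis that $\HH$ acts by Poisson automorphisms, and one must invoke $\HH$-stability of $I$ at exactly the right moment to promote the inclusion $I\subseteq P$ to $I\subseteq (P:\HH)$. The noetherian and rationality hypotheses are not needed for these direct manipulations themselves, but they guarantee that the framework of Lemma \ref{torusPprime} and the identification \eqref{pHprime} are available, so that the resulting $\HH$-Poisson-prime ideal is automatically prime.
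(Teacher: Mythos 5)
Your proof is correct; the paper itself gives no argument for this lemma, citing it directly from \cite{GpDixMo}, and your direct verification is the standard one. The only point worth making explicit is that the step ``$IJ\subseteq P$ forces $I\subseteq P$ or $J\subseteq P$'' is licensed either by Lemma \ref{Pprimeinfo}(b) (which uses the noetherian hypothesis to make $P$ genuinely prime) or, without noetherianity, directly from the definition of Poisson-prime since your $I$ and $J$ are Poisson ideals --- and you have handled this correctly.
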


\begin{definition}  \label{HPstrat}
Let $R$ be a noetherian Poisson algebra equipped with a rational Poisson action of an algebraic torus $\HH$. For $J \in \HPspec R$, set
$$
\Pspec_J R = \{ P \in \Pspec R \mid (P:\HH) = J \}.
$$
In view of Lemma \ref{H.Poisson.ideals}(b), we obtain a partition
$$
\Pspec R = \bigsqcup_{J \in \HPspec R} \Pspec_J R,
$$
which is known as the \emph{$\HH$-stratification of $\Pspec R$}.

Let $\E_J$ denote the set of $\HH$-eigenvectors in $R/J$. Since $R/J$ is a domain (Lemma \ref{torusPprime}), $\E_J$ is multiplicatively closed, and the localization $R_J := (R/J)[\E_J^{-1}]$ is a subalgebra of the quotient field $\Fract R/J$. The Poisson bracket on $R$ uniquely induces Poisson brackets on $R/J$ and $R_J$ (as well as on $\Fract R/J$), so that these algebras become Poisson algebras. Similarly, the action of $\HH$ uniquely induces actions on $R/J$ and $R_J$ by Poisson automorphisms, as well as on $\Fract R/J$, although the latter action is no longer rational. 
\end{definition}

\begin{theorem}  \label{Pstratif} \cite[Theorem 4.2]{GpDixMo}
Let $R$ be a noetherian Poisson algebra equipped with a rational Poisson action of an algebraic torus $\HH$. Let $J \in \HPspec R$.

{\rm (a)} The algebra $R_J$ is a graded field with respect to its induced
$X(\HH)$-grading, i.e., all nonzero homogeneous elements of $R_J$ are invertible.

{\rm (b)} $\Pspec_J R$ is homeomorphic to $\Pspec R_J$ via localization and
contraction.

{\rm (c)} $\Pspec R_J$ is homeomorphic to $\Spec Z_p(R_J)$ via contraction
and extension.
\end{theorem}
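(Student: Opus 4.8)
Part (a) is immediate from the construction of $R_J$. By definition $R_J = (R/J)[\E_J^{-1}]$ inverts every nonzero $\HH$-eigenvector of the domain $R/J$, and such eigenvectors are exactly the nonzero homogeneous elements for the $X(\HH)$-grading. A nonzero homogeneous element of $R_J$ has the form $a\,s^{-1}$ with $a\in R/J$ homogeneous and $s\in\E_J$; since $s^{-1}$ is already a unit, it suffices to observe that $a$ is then a nonzero homogeneous element of $R/J$, i.e.\ $a\in\E_J$, hence invertible in $R_J$. Thus every nonzero homogeneous element of $R_J$ is a unit, so $R_J$ is a graded field.

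For part (b) I would run the classical localization correspondence, checking at each step that it respects the Poisson structure. First, every $P\in\Pspec_J R$ contains $(P:\HH)=J$, so $P\mapsto P/J$ identifies $\Pspec_J R$ with the Poisson primes $\bar P$ of $R/J$ whose $\HH$-core is zero. Because the $\HH$-action is rational, the $\HH$-core of $\bar P$ is the ideal generated by the $\HH$-eigenvectors it contains, so $(\bar P:\HH)=0$ is equivalent to $\bar P\cap\E_J=\emptyset$. The standard localization theorem gives an inclusion-preserving homeomorphism, via extension and contraction, between the primes of $R/J$ disjoint from the multiplicative set $\E_J$ and the primes of $R_J$. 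Since localization preserves the Poisson bracket (Definition \ref{P.misc}), a prime of $R_J$ is Poisson exactly when its contraction is, so this homeomorphism restricts to one between $\Pspec_J R$ and $\Pspec R_J$.

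Part (c) is the substantive one. Writing $Z:=Z_p(R_J)$, I would set up the two candidate maps: contraction $P\mapsto P\cap Z$ and extension $\mathfrak q\mapsto R_J\mathfrak q$. Contraction lands in $\Spec Z$ because $Z/(P\cap Z)$ embeds in the domain $R_J/P$; and $R_J\mathfrak q$ is a Poisson ideal because it is generated by Poisson-central elements, whence $\{R_J,R_J\mathfrak q\}\subseteq R_J\mathfrak q$. The whole theorem then rests on the key lemma that every Poisson ideal $I$ of $R_J$ satisfies $I=R_J(I\cap Z)$. Granting it, $R_J(P\cap Z)=P$ is immediate, while $R_J\mathfrak q\cap Z=\mathfrak q$ and the primeness of $R_J\mathfrak q$ follow from the fact that the graded field $R_J$ is free as a module over its graded subalgebra $Z$; continuity of both maps is routine, yielding the asserted homeomorphism.

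The key lemma is where I expect the real difficulty, and I would attack it with a minimal-support argument built on the structure from (a). By part (a), $R_J=\bigoplus_{\lambda\in S}(R_J)_\lambda$ with each nonzero $(R_J)_\lambda$ one-dimensional over the field $(R_J)_0$ and $S\subseteq X(\HH)$ a subgroup, so $R_J$ is a commutative Poisson analog of a quantum torus with homogeneous units $u_\lambda$. Given a nonzero $f\in I$ of minimal support, I would bracket it with the units $u_\lambda$: since each $u_\lambda$ is invertible and $I$ is a Poisson ideal, $u_\lambda^{-1}\{u_\lambda,f\}\in I$, and because $\{u_\lambda,-\}$ preserves each graded line of $f$ and acts there in a degree-dependent way, subtracting an appropriate multiple of $f$ produces an element of $I$ of strictly smaller support, hence zero by minimality. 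This should force all degrees occurring in $f$ to agree modulo the degrees of Poisson-central elements, so that $f$ is a homogeneous unit times a nonzero element of $I\cap Z$; combined with freeness of $R_J$ over $Z$ this upgrades to $I=R_J(I\cap Z)$. The main obstacle is making the scaling-and-subtraction step precise when the Poisson bracket acts nontrivially on the degree-zero field $(R_J)_0$, so that the ``eigenvalues'' that $\{u_\lambda,-\}$ produces on homogeneous components are genuinely controlled; organizing this bookkeeping, and the accompanying reduction from minimal-support elements to arbitrary ones, is the crux of the argument.
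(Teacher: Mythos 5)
The paper does not prove this theorem; it is quoted verbatim from \cite[Theorem 4.2]{GpDixMo}, so there is no internal proof to compare against. Judged on its own terms, your treatment of (a) and (b) is correct and is the standard argument: a homogeneous element of $R_J$ has the form $as^{-1}$ with $a$ homogeneous in $R/J$, hence is a unit; and rationality of the $\HH$-action identifies the condition $(\ol{P}:\HH)=0$ with $\ol{P}\cap\E_J=\emptyset$, after which the usual localization correspondence applies and visibly preserves the Poisson property in both directions.

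Part (c) has a genuine gap, and the obstacle you single out is largely self-inflicted. You never prove the key lemma that a Poisson ideal $I$ of $R_J$ satisfies $I=R_J(I\cap Z)$, and your plan for it --- bracketing a minimal-support element $f$ against homogeneous units $u_\lambda$ and then ``subtracting an appropriate multiple of $f$'' --- founders exactly where you say it does: $u_\lambda^{-1}\{u_\lambda,-\}$ does not act by scalars on the graded lines, because $\{u_\lambda,-\}$ is a derivation that also moves the coefficient field $(R_J)_0$. The standard device removes this problem entirely: since every nonzero homogeneous component of $f$ is a unit, multiply $f$ by the inverse of one of its components so that $f$ has a homogeneous component equal to $1$; then for every nonzero homogeneous $r$ the element $\{f,r\}$ lies in $I$ and its component in the corresponding degree is $\{1,r\}=0$, so $\{f,r\}r^{-1}$ is an element of $I$ of strictly smaller support --- no subtraction, no eigenvalue bookkeeping --- and minimality forces $\{f,r\}=0$, i.e.\ minimal-support elements of $I$ are homogeneous units times elements of $I\cap Z$. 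Even granting that, your deduction of the full correspondence is incomplete: you still owe the passage from this statement to $I=R_J(I\cap Z)$ (the minimal-support induction must be run over $I\setminus R_J(I\cap Z)$, and it is not automatic), and the claims that $R_J\mathfrak{q}$ is prime with $R_J\mathfrak{q}\cap Z=\mathfrak{q}$ rest on $R_J$ being graded-free over $Z:=Z_p(R_J)$ on a basis of homogeneous units --- which in turn requires checking that $Z$ is itself a graded field (it is a homogeneous subalgebra because the bracket is $X(\HH)$-graded, and inverses of Poisson-central homogeneous elements are again Poisson-central), none of which you establish. These are precisely the points carried out in \cite[Theorem 4.2]{GpDixMo}, following the quantum-torus template of \cite[II.3]{BrGo}, and they are the substance of part (c).
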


From this theorem we obtain a Poisson analog of the first conclusion of \cite[Theorem 5.3]{murcia.proc} as follows.

\begin{definition}  \label{PnormalHsep}
Suppose $R$ is a Poisson algebra equipped with a Poisson action of a group $\HH$. Adapting the notation of \cite[\S5.2]{murcia.proc}, we say that $\HPspec R$ has \emph{Poisson-normal $\HH$-separation} provided that for any proper inclusion $P \subsetneq Q$ of Poisson $\HH$-prime ideals of $R$, there exists a Poisson-normal $\HH$-eigenvector of $R/P$ which lies in $Q/P$.
\end{definition}

\begin{theorem}  \label{HPnormsep>Pnormsep}
Let $R$ be a noetherian Poisson algebra equipped with a rational Poisson action of an algebraic torus $\HH$. If $\HPspec R$ has Poisson-normal $\HH$-separation, then $\Pspec R$ has Poisson-normal separation.
\end{theorem}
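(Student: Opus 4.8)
The plan is to separate an arbitrary comparable pair $P \subsetneq Q$ in $\Pspec R$ by analyzing the $\HH$-cores $J := (P:\HH)$ and $J' := (Q:\HH)$, which are Poisson $\HH$-prime ideals by Lemma \ref{H.Poisson.ideals}(b) and satisfy $J \subseteq J'$. Throughout I use the elementary observation that an $\HH$-eigenvector lies in a Poisson prime ideal $\mathfrak m$ exactly when it lies in $(\mathfrak m:\HH)$, since $h\cdot c$ is a nonzero scalar multiple of $c$. First I would dispose of the case $J \subsetneq J'$: applying Poisson-normal $\HH$-separation to the pair $J \subsetneq J'$ yields a nonzero Poisson-normal $\HH$-eigenvector of $R/J$ in $J'/J$, which lifts (eigenspaces surject under $R\twoheadrightarrow R/J$) to an $\HH$-eigenvector $c \in J' \subseteq Q$ with $\{c,R\}\subseteq Rc+J\subseteq Rc+P$. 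Since $c$ is an eigenvector not in $J=(P:\HH)$, we get $c\notin P$, so the image of $c$ is a nonzero Poisson-normal element of $R/P$ lying in $Q/P$.

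The remaining, and main, case is $J=J'$, i.e.\ $P$ and $Q$ lie in the same stratum $\Pspec_J R$. Passing to $R/J$ (a Poisson domain with induced rational $\HH$-action, to which Poisson-normal $\HH$-separation descends because its $\HH$-Poisson-primes are exactly those of $R$ containing $J$), I may assume $J=0$, so $R$ is a domain with $(P:\HH)=(Q:\HH)=0$. Writing $R_0 := R[\E_0^{-1}]$ for the zero-stratum localization, Theorem \ref{Pstratif}(b),(c) transports $P\subsetneq Q$ first to $\widetilde P \subsetneq \widetilde Q$ in $\Pspec R_0$ and then to a proper inclusion $\mathfrak p \subsetneq \mathfrak q$ of primes of the Poisson center $Z:=Z_p(R_0)$, with $\widetilde P=\mathfrak p R_0$ and $\widetilde Q=\mathfrak q R_0$. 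I then choose $z\in\mathfrak q\setminus\mathfrak p$; it is Poisson-central (hence Poisson-normal) in $R_0$ and lies in $\widetilde Q\setminus\widetilde P$.

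The heart of the argument is to descend $z$ to a Poisson-normal element of $R$ inside $Q$. The obstruction is that $z=r/s$ carries a denominator, and clearing it respects normality only if the denominator is itself Poisson-normal: if $z=r/s$ with $s$ a Poisson-normal $\HH$-eigenvector, then $r=zs$ satisfies $\{r,b\}=z\{s,b\}=(r/s)\{s,b\}\in Rr$ because $\{s,b\}\in Rs$, so $r$ is Poisson-normal in $R$, and $r\in\widetilde Q\setminus\widetilde P$ contracts into $Q\setminus P$, giving exactly the element sought. Hence I must show $z\in R[\NN^{-1}]$, where $\NN$ is the multiplicative set of Poisson-normal $\HH$-eigenvectors of $R$. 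Since the bracket is $X(\HH)$-graded, $Z$ is a graded subalgebra, so it suffices to treat homogeneous $z$; for such $z$ the denominator ideal $D:=\{a\in R\mid az\in R\}$ is a nonzero $\HH$-stable Poisson ideal of $R$, the $\HH$-stability following by comparing homogeneous components and the Poisson property from centrality of $z$ via $\{b,a\}z=\{b,az\}\in R$.

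Everything thus reduces to the key lemma, which I expect to be the main obstacle: in a Poisson domain with rational $\HH$-action in which $0$ is $\HH$-Poisson-prime and Poisson-normal $\HH$-separation holds, every nonzero $\HH$-stable Poisson ideal $I$ contains a nonzero Poisson-normal $\HH$-eigenvector. To prove it I would take the finitely many minimal primes $J_1,\dots,J_k$ over $I$, which are nonzero proper $\HH$-Poisson-primes by Lemma \ref{torusPprime}; apply Poisson-normal $\HH$-separation to each pair $0\subsetneq J_i$ to get nonzero Poisson-normal $\HH$-eigenvectors $c_i\in J_i$; and note that $c:=c_1\cdots c_k$ is again a Poisson-normal $\HH$-eigenvector lying in $\bigcap_i J_i=\sqrt I$, so some power $c^m\in I$. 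Applying this to $I=D$ supplies the required normal denominator $s\in D\cap\NN$, completing the descent and the proof. The delicate points to monitor are the reduction from non-homogeneous to homogeneous central elements (needed so that $D$ is genuinely $\HH$-stable) and the verification that the contracted element is nonzero modulo $P$, both of which rest on the eigenvector-versus-core observation noted at the outset.
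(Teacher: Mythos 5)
Your proposal is correct and follows essentially the same route as the paper: the same case split on the $\HH$-cores $J\subseteq J'$, the same use of Theorem \ref{Pstratif}(b)(c) to produce a Poisson-central element of $R_0$ separating the two strata members, and the same descent by writing that element as $r s^{-1}$ with $s$ a Poisson-normal $\HH$-eigenvector obtained from the minimal primes over the denominator ideal. The only (immaterial) difference is that you pass through $\sqrt{I}=\bigcap_i J_i$ and take a power $c^m$, where the paper uses a product $P_1\cdots P_m\subseteq I$ of the minimal primes directly.
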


\begin{proof}
Let $P \subsetneq Q$ be a proper inclusion of Poisson prime ideals of $R$. Set $J := (P:\HH)$ and $K := (Q:\HH)$, which are members of $\HPspec R$ such that $P \in \Pspec_J R$ and $Q \in \Pspec_K R$. Obviously $J \subseteq K$.  

If $J \ne K$, then by Poisson-normal $\HH$-separation there is an $\HH$-eigenvector $c\in K \setminus J$ which is Poisson-normal modulo $J$. Obviously $c\in Q$ and $c$ is Poisson-normal modulo $P$. Since $Rc$ is an $\HH$-invariant ideal not contained in $J$, we see that $Rc \nsubseteq P$, that is, $c\notin P$. Thus in this case we are done.

Now assume that $J = K$. Since it is harmless to pass to $R/J$, there is no loss of generality in assuming that $J=K=0$. We shall need the following fact:
\begin{equation}  \label{R0denoms}
Z_p(R_0) \subseteq \{ ac^{-1} \mid a,c \in R\ \text{and}\ c\ \text{is a Poisson-normal}\ \HH\text{-eigenvector} \}.
\end{equation}
First note that the induced $\HH$-action on $R_0$ is rational, equivalently, $R_0$ is $\xh$-graded. Since $\HH$ acts by Poisson automorphisms, the Poisson bracket on $R_0$ is $X(\HH)$-graded, that is, $\{ (R_0)_u, (R_0)_v \} \subseteq (R_0)_{u+v}$ for all $u,v \in X(\HH)$. It follows that an element $z\in R_0$ is Poisson-central if and only if all its homogeneous components are Poisson-central. Hence, to prove \eqref{R0denoms} it suffices to show that any homogeneous element $z\in Z_p(R_0)$ can be written in the desired form.

Given such a $z$, consider the nonzero ideal $I := \{ r\in R \mid zr \in R \}$. Since $z$ is both homogeneous and Poisson-central, $I$ is an $\HH$-stable Poisson ideal. If $z\in R$, we can write $z = z1^{-1}$ in the desired form, so we may assume $z\notin R$, whence $I \ne R$. There are prime ideals $P_1,\dots,P_m$ minimal over $I$ such that $P_1P_2\cdots P_m \subseteq I$. Lemma \ref{torusPprime} implies that the $P_i$ are Poisson $\HH$-prime ideals. By Poisson-normal $\HH$-separation, each $P_i$ contains a Poisson-normal $\HH$-eigenvector $c_i$. Hence, $c := c_1c_2 \cdots c_m$ is a Poisson-normal $\HH$-eigenvector lying in $I$, and $z = ac^{-1}$ where $a = zc \in R$. This establishes \eqref{R0denoms}.

Since $J = K = 0$, we have $P,Q \in \Pspec_0 R$. By Theorem \ref{Pstratif}(b)(c), $R_0P \cap Z_p(R_0) \subsetneq R_0Q \cap Z_p(R_0)$, so there exists a Poisson-central element $z \in R_0Q \setminus R_0P$. Apply \eqref{R0denoms} to write $z = ac^{-1}$ for some $a,c\in R$ with $c$ a Poisson-normal $\HH$-eigenvector of $R$. Note that $a\in Q \setminus P$, since $R_0Q \cap R = Q$ and $R_0P \cap R = P$. Given any $r\in R$, we have $\{c,r\} = cs$ for some $s\in R$, whence
$$
\{a,r\} = \{zc,r\} = z\{c,r\} = zcs = as.
$$
Therefore $a$ is Poisson-normal in $R$, hence also Poisson-normal modulo $P$, and the proof is complete
\end{proof}

\sectionnew{A construction of Poisson-normal elements}  \label{construct}

\subsection{Basic assumptions}  \label{pCauchon.assump} 
Assume throughout this section that $R = A[X; \sig, \de]_p$ is a \emph{Poisson-Cauchon extension} in the sense of \cite[Definition 4.4]{GY.pcluster}, that is, the Poisson polynomial algebra $R$ is equipped with a rational Poisson action of a $\KK$-torus $\HH$ such that
\begin{itemize}
\item The subalgebra $A$ is $\HH$-stable, and $X$ is an $\HH$-eigenvector.
\item $\de$ is locally nilpotent.
\item There exists $h_\circ \in \hfrak := \Lie \HH$ such that $(h_\circ \cdot)|_A = \sig$ and the $h_\circ$-eigenvalue $\la_\circ$ of $X$ is nonzero.
\end{itemize}
The third condition relies on $X$ being an $\hfrak$-eigenvector, which follows from Proposition \ref{h.action}(c).

We write $\chi_r : \HH \rightarrow \kx$ to denote the $\HH$-eigenvalue of an $\HH$-eigenvector $r\in R$.

\subsection{The Poisson Cauchon map}
Set $\Rhat := A[X^{\pm1}; \sig, \de]_p$. Since $\Rhat$ is a localization of $R$ at a multiplicative set consisting of $\HH$-eigenvectors, the action of $\HH$ on $R$ extends uniquely to a rational action on $\Rhat$ by $\KK$-algebra automorphisms. As is easily checked, this is a Poisson action of $\HH$ on $\Rhat$.
Let $\theta : A \rightarrow \Rhat$ be the \emph{Poisson Cauchon map} defined by
\begin{equation}  \label{pCauchonmap}
\theta(a) = \sum_{l=0}^\infty \frac1{l!} \left( \frac{-1}{\la_\circ} \right)^l \de^l(a) X^{-l}.
\end{equation} 

\begin{proposition}  \label{theta.properties} \cite[Lemmas 3.4, 3.6, Theorem 3.7]{KGSL}; \cite[Proposition 4.5, Corollary 4.6]{GY.pcluster}

{\rm(a)} $\theta$ is an injective Poisson algebra homomorphism.

{\rm(b)} $\{X, \theta(a) \} = \theta\sigma(a) X$ for all $a\in A$.

{\rm(c)} $\theta$ extends uniquely to an injective Poisson algebra homomorphism $A[Y;\sig]_p \rightarrow \Rhat$ with $\theta(Y) = X$, and then to a Poisson algebra isomorphism $A[Y^{\pm1};\sig]_p \rightarrow \Rhat$.

{\rm(d)} Set $B := \theta(A)$ and $T := \theta(A[Y;\sig]_p) \subseteq \Rhat$. Then $T = B[X;\al]_p$ where $\al$ is the Poisson derivation of $B$ defined by $\al(\theta(a)) = \theta(\sig(a))$.

{\rm(e)} The localization $T[X^{-1}] = B[X^{\pm1};\al]_p$ coincides with $\Rhat$.

{\rm(f)} The $\HH$-action on $A$ extends uniquely to a rational Poisson action of $\HH$ on $A[Y^{\pm1};\sig]_p$ such that $\chi_Y = \chi_X$. With respect to this action, the isomorphism $\theta : A[Y^{\pm1};\sig]_p \rightarrow \Rhat$ is $\HH$-equivariant.

{\rm(g)} $\al = (h_\circ\cdot)|_B$.
\end{proposition}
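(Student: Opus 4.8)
The plan is to treat the explicit formula \eqref{pCauchonmap} as the fundamental object and read off (a)--(g) from two basic facts: that $\theta$ is built from the locally nilpotent $\de$ (so each sum is finite), and that the Poisson-Cauchon axioms force a simple commutation relation between $\sig$ and $\de$. First I would establish that relation. Since $(h_\circ\cdot)|_A = \sig$ and $(h_\circ\cdot)X = \la_\circ X$, computing $(h_\circ\cdot)\{X,a\}$ in two ways --- once via the Leibniz rule for the Poisson derivation $h_\circ\cdot$, once via $\{X,a\} = \sig(a)X + \de(a)$ --- and comparing the $X$-free terms yields $\sig\de - \de\sig = \la_\circ\de$, hence $\sig\de^l = \de^l\sig + l\la_\circ\de^l$ for all $l$. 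For the algebra part of (a), comparing the coefficient of each $X^{-n}$ in $\theta(ab)$ and $\theta(a)\theta(b)$ reduces multiplicativity to the ordinary Leibniz rule $\de^n(ab) = \sum_{l+m=n}\binom nl \de^l(a)\de^m(b)$; injectivity is immediate since $\theta(a) = a + (\text{lower-degree-in-}X\text{ terms})$ and the $X^n$ form an $A$-basis of $\Rhat$.

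Part (b) I would prove by a direct computation: expanding $\{X,\theta(a)\}$ with the Leibniz rule and $\{X,X^{-l}\}=0$, then substituting $\sig\de^l = \de^l\sig + l\la_\circ\de^l$, the $\de^l\sig(a)$ contributions assemble into $\theta(\sig(a))X$ while the two remaining families of terms telescope and cancel. The genuine obstacle is the Poisson (bracket) part of (a), namely $\theta\{a,b\} = \{\theta(a),\theta(b)\}$. The tempting shortcut --- realizing $\theta$ as the restriction of $\exp(D)$ for a single locally nilpotent Poisson derivation $D$ of $\Rhat$ --- does not work: the product-derivation $-\la_\circ^{-1}X^{-1}\wt\de$ (with $\wt\de|_A = \de$, $\wt\de(X)=0$) exponentiates to $\theta$ on $A$ but is not a Poisson derivation, whereas the genuinely Poisson derivation $-\la_\circ^{-1}X^{-1}\{X,-\}$ is not locally nilpotent and does not restrict to $\theta$. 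This is consistent with (c), which shows $\theta$ extends to an isomorphism between two \emph{different} Poisson structures rather than to an automorphism of $\Rhat$. I would therefore verify the bracket identity head-on, expanding $\{\theta(a),\theta(b)\}$ as a double sum, using the biderivation property together with the values $\{\de^l(a),X^{-m}\}$ and $\{X^{-l},\de^m(b)\}$ computed from $\{X,a\}=\sig(a)X+\de(a)$, and collecting terms of each $X$-degree; the commutation relation and the fact that $\{A,A\}\subseteq A$ then resum the double sum to $\theta\{a,b\}$. This bookkeeping is the main technical load of the proposition.

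Parts (c)--(e) are then formal. Since $A[Y;\sig]_p$ has underlying polynomial ring $A[Y]$, sending $\sum a_nY^n \mapsto \sum\theta(a_n)X^n$ is the unique algebra map extending $\theta$ with $Y\mapsto X$; it is Poisson because, the bracket being a biderivation, one need only check pairs from $A\cup\{Y\}$, where the cases $\{a,b\}$ and $\{Y,a\}=\sig(a)Y$ are settled by (a) and (b). A leading-$X$-degree argument gives injectivity, and inverting $Y\mapsto X$ produces the injective homomorphism $A[Y^{\pm1};\sig]_p\to\Rhat$. Surjectivity follows because the image contains $X^{\pm1}$ and $B=\theta(A)$, and each $a\in A$ is recovered from $\theta(a)$ and the $\theta(\de^l(a))$ by induction on the $\de$-nilpotency degree; this proves (c). For (d) and (e), $B$ is a Poisson subalgebra as the image of a Poisson map, $T=\theta(A[Y;\sig]_p)=B[X]$ with $X$ transcendental over $B$, and since (b) gives $\{X,B\}=\theta(\sig(A))X\subseteq BX$, the converse half of Oh's theorem quoted in Definition \ref{Ppoly} identifies $T$ with a Poisson-Ore extension; matching brackets shows $T=B[X;\al]_p$ with $\al(\theta(a))=\theta(\sig(a))$ and zero derivation part, and inverting $X$ gives $T[X^{-1}]=B[X^{\pm1};\al]_p=\Rhat$.

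For (f) I would define the $\HH$-action on $A[Y^{\pm1};\sig]_p$ by keeping the given action on $A$ and declaring $Y$ an eigenvector with $\chi_Y := \chi_X$. This is a well-defined rational Poisson action precisely because $\sig = (h_\circ\cdot)|_A$ is $\HH$-equivariant, which holds since $h_\circ\cdot$ commutes with $\HH$ by Proposition \ref{h.action}(b); equivariance of $\theta$ then follows from the identity $\de(h\cdot a) = \chi_X(h)^{-1}(h\cdot\de(a))$ --- itself read off from $\HH$-equivariance of $\{X,a\}=\sig(a)X+\de(a)$ --- applied termwise in \eqref{pCauchonmap}. Finally, (g) is immediate: by Proposition \ref{h.action}(d) the $\HH$-equivariant map $\theta$ is also $\hfrak$-equivariant, so $(h_\circ\cdot)\theta(a) = \theta(h_\circ\cdot a) = \theta(\sig(a)) = \al(\theta(a))$, whence $\al = (h_\circ\cdot)|_B$.
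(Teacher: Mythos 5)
The paper does not actually prove this proposition: it is imported wholesale from \cite[Lemmas 3.4, 3.6, Theorem 3.7]{KGSL} and \cite[Proposition 4.5, Corollary 4.6]{GY.pcluster}, so any proof you give is necessarily ``different from the paper's.'' That said, your reconstruction follows essentially the route of those references: derive the commutation relation $\sig\de-\de\sig=\la_\circ\de$ from $(h_\circ\cdot)|_A=\sig$ and $h_\circ\cdot X=\la_\circ X$, read off multiplicativity and injectivity of $\theta$ from the formula \eqref{pCauchonmap} coefficientwise, prove (b) by the telescoping computation, and then obtain (c)--(g) formally from (a), (b), the converse half of Oh's theorem, and Proposition \ref{h.action}(b)(d). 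All of those steps check out, including the surjectivity argument by induction on the $\de$-nilpotency index and the derivation of $\de(h\cdot a)=\chi_X(h)^{-1}(h\cdot\de(a))$ for (f). Your aside explaining why $\theta$ is not the exponential of a single locally nilpotent Poisson derivation is a genuinely useful observation that the sources do not spell out.

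The one place where your proposal is a plan rather than a proof is the bracket identity $\theta\{a,b\}=\{\theta(a),\theta(b)\}$, which you correctly identify as the main technical load but then dismiss as ``bookkeeping'' driven by ``the commutation relation and $\{A,A\}\subseteq A$.'' Those two inputs are not enough to close the computation: the double-sum expansion of $\{\theta(a),\theta(b)\}$ produces the terms $\{\de^l(a),\de^m(b)\}$, $\sig\de^l(a)\,\de^m(b)$ and $\de^{l}(a)\,\sig\de^m(b)$, whereas the target $\theta\{a,b\}$ involves $\de^n\{a,b\}$, and the only bridge between the two is the Poisson $\sig$-derivation identity \eqref{delta.cond} (equivalently, the Jacobi identity applied to $\{X,\{a,b\}\}$), iterated $n$ times to expand $\de^n\{a,b\}$. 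You should name this ingredient explicitly and either carry out the induction that produces the iterated form of \eqref{delta.cond}, or replace the head-on resummation by the standard indirect argument (induction on the $\de$-nilpotency indices of $a$ and $b$, comparing $X$-degrees after applying $\{X,-\}$ and using part (b)). As written, the central claim of part (a) is asserted with a strategy that, taken literally, cannot terminate.
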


\begin{lemma}  \label{max.des(a)}
Let $a\in A$ be nonzero and let $s \in \Znn$ be maximal such that $\de^s(a) \ne 0$. Then $s$ is minimal such that $\theta(a)X^s \in R$.
\end{lemma}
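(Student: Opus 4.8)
The plan is to compute $\theta(a)$ explicitly as a Laurent polynomial in $X$ and read off its lowest-degree term. First I would invoke local nilpotence of $\de$: since $s$ is maximal with $\de^s(a)\neq 0$, the defining series \eqref{pCauchonmap} terminates at $l=s$, giving the finite expansion
\[
\theta(a) = \sum_{l=0}^{s} \frac1{l!}\left(\frac{-1}{\la_\circ}\right)^l \de^l(a)\, X^{-l} \in \Rhat = A[X^{\pm1};\sig,\de]_p.
\]
The distinct indices $l=0,1,\dots,s$ contribute the distinct monomials $X^0, X^{-1},\dots, X^{-s}$, so no cancellation can occur among the terms. In particular the bottom term is the $l=s$ term $\frac1{s!}\left(\frac{-1}{\la_\circ}\right)^s \de^s(a)\, X^{-s}$, whose coefficient is nonzero because $\charr\KK=0$ (so $s!$ is invertible), $\la_\circ\neq 0$, and $\de^s(a)\neq 0$ by the maximality of $s$.

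Next I would use the fact that multiplying $\theta(a)$ by a positive power of $X$ only raises exponents, so the set of $t\in\Zset$ with $\theta(a)X^t\in R$ is upward-closed; minimality therefore reduces to checking two things. For membership, $\theta(a)X^s = \sum_{l=0}^s \frac1{l!}\left(\frac{-1}{\la_\circ}\right)^l \de^l(a)\,X^{s-l}$, and every exponent $s-l$ lies in $\{0,1,\dots,s\}\subseteq\Znn$, so this element lies in the polynomial subalgebra $R=A[X;\sig,\de]_p$ of $\Rhat$. For the failure just below $s$, the $l=s$ term of $\theta(a)X^{s-1}$ is a nonzero $A$-multiple of $X^{-1}$, and by the no-cancellation observation it is not killed by any of the remaining terms; hence $\theta(a)X^{s-1}$ has a nonzero coefficient on a negative power of $X$ and does not lie in $R$. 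Combining these two facts shows that $s$ is exactly the minimal exponent with $\theta(a)X^s\in R$, as asserted.

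There is no serious obstacle here: the argument is essentially just identifying the lowest-degree monomial of $\theta(a)$. The only point requiring care — and the place where all three standing hypotheses enter — is verifying that this bottom coefficient is nonzero, for which I need characteristic zero (to invert $s!$), the nonvanishing of $\la_\circ$, and the maximality defining $s$, together with the fact that distinct powers of $X$ cannot interfere.
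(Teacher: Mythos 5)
Your proposal is correct and follows essentially the same route as the paper: both truncate the series \eqref{pCauchonmap} at $l=s$ using local nilpotence, observe that $\theta(a)X^s$ then has only nonnegative powers of $X$, and derive a contradiction from $\theta(a)X^{s-1}\in R$ by isolating the nonzero coefficient of $X^{-1}$ coming from the $l=s$ term (the paper phrases this as $\de^s(a)\in A\cap RX=0$, which is your ``no cancellation'' observation). No gaps.
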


\begin{proof} Since $\de^l(a) = 0$ for $l>s$, we have $\theta(a) = \sum_{l=0}^s c_l \de^l(a) X^{-l}$ for some nonzero $c_l \in \KK$. Thus $\theta(a)X^s \in R$.

Suppose that $s>0$ and $\theta(a)X^t \in R$ for some $t<s$. Then $\theta(a)X^{s-1} \in R$, whence $\de^s(a) X^{-1} \in R$. Consequently, $\de^s(a) \in A \cap RX = 0$, which contradicts our choice of $s$. Therefore $s$ is minimal such that $\theta(a)X^s \in R$.
\end{proof}

The following Poisson version of \cite[Lemma 2.2]{KGSL2} is adapted from the proof of \cite[Theorem 4.7]{GY.pcluster}.

\begin{proposition}  \label{firstPnormal}
Assume that $A$ is a domain.
Let $a\in A$ be a Poisson-normal $\HH$-eigenvector, and let $s \in \Znn$ be maximal such that $\de^s(a) \ne 0$. Then the element $x := \theta(a)X^s$ is a Poisson-normal $\HH$-eigen\-vector in $R$. In particular, $\{ x, X \} = - \eta x X$, where $\eta$ is the $\sig$-eigenvalue of $a$.
\end{proposition}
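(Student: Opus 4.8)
The plan is to prove that $x$ is an $\HH$-eigenvector and that $x$ is Poisson-normal in $R$, and then to read off $\{x,X\}=-\eta xX$ from a one-line computation. First note that since $a$ is an $\HH$-eigenvector it is an $\hfrak$-eigenvector (Proposition \ref{h.action}(c)), so $\sig(a)=(h_\circ\cdot)(a)=\eta a$. I would transport the problem to the Laurent algebra $A[Y^{\pm1};\sig]_p$ through the Poisson isomorphism $\theta:A[Y^{\pm1};\sig]_p\congto\Rhat$ of Proposition \ref{theta.properties}(c),(f), under which $\theta(Y)=X$ and hence $x=\theta(a)X^s=\theta(aY^s)$. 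Because $a$ is an $\HH$-eigenvector and $\chi_Y=\chi_X$, the element $aY^s$ is an $\HH$-eigenvector, and $\HH$-equivariance of $\theta$ immediately yields that $x$ is an $\HH$-eigenvector of $\Rhat$; moreover $x$ lies in $R$ by Lemma \ref{max.des(a)}. The advantage of this Laurent algebra is that $\de$ is absent, so $\{Y,a'\}=\sig(a')Y$ for all $a'\in A$; using that $a$ is Poisson-normal in $A$, say $\{a,a'\}=a''a$, together with $\sig(a)=\eta a$, short computations give $\{aY^s,a'\}=(a''+s\sig(a'))\,aY^s$ and $\{aY^s,Y\}=-\eta\,(aY^s)\,Y$. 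Since the elements that bracket $aY^s$ into $A[Y^{\pm1};\sig]_p\cdot aY^s$ form a unital subalgebra containing $A$ and the unit $Y$, the element $aY^s$ is Poisson-normal in $A[Y^{\pm1};\sig]_p$, whence $x$ is Poisson-normal in $\Rhat$.

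The main obstacle is the descent from Poisson-normality in $\Rhat$ to Poisson-normality in $R$: for a general element normal in $\Rhat$ the intersection $\Rhat x\cap R$ can strictly exceed $Rx$, so the particular shape of $x$ must be used. The decisive point is that the maximality of $s$ forces the lowest $X$-degree of $x$ to be $0$: writing $\theta(a)=\sum_{l=0}^s c_l\de^l(a)X^{-l}$ with all $c_l\ne0$, the bottom term of $x=\theta(a)X^s$ is $c_s\de^s(a)\ne0$, so $\mathrm{val}_X x=0$, where $\mathrm{val}_X$ is the lowest-degree valuation on $\Rhat=A[X^{\pm1}]$. As $A$ is a domain this valuation is additive, whence $\Rhat x\cap R=Rx$. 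Now for any $r\in R$ we have $\{x,r\}\in\Rhat x$ by the normality already established, while $\{x,r\}\in R$ because $R$ is a Poisson subalgebra of its localization $\Rhat$; hence $\{x,r\}\in\Rhat x\cap R=Rx$, so $x$ is Poisson-normal in $R$.

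Finally I would obtain the bracket formula directly: since $\{X^s,X\}=0$ and $\{X,\theta(a)\}=\theta(\sig(a))X=\eta\theta(a)X$ by Proposition \ref{theta.properties}(b), the derivation property gives $\{x,X\}=\{\theta(a),X\}X^s=-\eta\theta(a)X^{s+1}=-\eta xX$, which also re-confirms normality on the generator $X$. The one delicate matter to keep track of is that $R$ and the subalgebra $B[X;\al]_p$ of Proposition \ref{theta.properties}(d) are incomparable inside $\Rhat$; routing the computation through $A[Y^{\pm1};\sig]_p$ and then descending by the valuation $\mathrm{val}_X$ sidesteps a direct evaluation of $\{x,a'\}$ for $a'\in A$, which would awkwardly mix $A$ with $B=\theta(A)$.
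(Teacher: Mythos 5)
Your proof is correct and follows essentially the same route as the paper: establish that $x$ is Poisson-normal in the Laurent extension $\Rhat$ (you do this in the isomorphic copy $A[Y^{\pm1};\sig]_p$ and transport via $\theta$, while the paper works with $b=\theta(a)$ inside $T=B[X;\al]_p$), and then descend to $R$ by showing $\Rhat x\cap R=Rx$, which in both arguments rests on the observation that maximality of $s$ forces the $X^0$-coefficient of $x$ to be nonzero. Your packaging of the descent step as additivity of the lowest-degree valuation on $A[X^{\pm1}]$ is a clean shortcut for the paper's induction on the minimal $t$ with $yX^t\in Rx$, but it is the same underlying idea.
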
 

\begin{proof} In view of Proposition \ref{theta.properties}, the element $b := \theta(a)$ is a Poisson-normal $\HH$-eigenvector in $B$, and the $h_\circ$-eigenvalue of $b$ equals that of $a$ (Proposition \ref{h.action}(d)), namely $\eta$. By Lemma \ref{max.des(a)}, $s$ is minimal such that $bX^s \in R$. This places $x \in R$, and clearly $x$ is an $\HH$-eigenvector.

Since
\begin{equation}  \label{X,b}
\{ X, b \} = \al(b) X = (h_\circ\cdot b) X = \eta b X,
\end{equation}
we see that $\{ x, X \} = - \eta x X$. It follows also that $b$ is Poisson-normal in $T$ and in $\Rhat$. In particular, $\Rhat b$ is a Poisson ideal of $\Rhat$. Since $\Rhat b = \Rhat x$, the ideal
$$
I := \Rhat x \cap R
$$
is a Poisson ideal of $R$. We show that $I = Rx$, which will prove that $x$ is Poisson-normal in $R$. Obviously $I \supseteq Rx$.

Let $y \in I$. Then $y \in \Rhat b$ implies $y X^u \in Tb$ for some $u \ge 0$. Now $y X^u = cb$ for some $c\in T$, and $cX^v \in R$ for some $v \ge 0$. Then
$$
y X^{u+v+s} = cb X^{v+s} = c X^v x \in Rx.
$$
Let $t \in \Znn$ be minimal such that $y X^t \in Rx$, and write $y X^t = rx$ for some $r\in R$.

We will show that $t=0$. Write
$$
r = \sum_{i\ge0} r_i X^i, \qquad y = \sum_{i\ge0} y_i X^i, \qquad x = \sum_{i\ge0} x_i X^i
$$
for some $r_i,y_i,x_i \in A$. In case $s=0$, we would have $x= b = a \in A$ and so $x_0 = a \ne 0$. In case $s>0$, we would have
$$
x_0 X^{-1} + \sum_{i\ge1} x_i X^{i-1} = x X^{-1} = b X^{s-1} \notin R
$$
by the minimality of $s$, again yielding $x_0 \ne 0$. Thus, $x_0 \ne 0$ in any case.

Observe that
$$
\sum_{i\ge0} y_i X^{i+t} = y X^t = rx = \sum_{i\ge0} r_i x X^i = \sum_{i,j\ge0} r_i x_j X^{i+j}.
$$
If $t>0$, it would follow that $r_0 x_0 = 0$, whence $r_0 = 0$. But then $r = r'X$ for some $r' \in R$, and so 
$$
y X^{t-1} = rx X^{-1} = r' x \in Rx,
$$
contradicting the minimality of $t$. Therefore $t=0$.

Consequently, $y = rx$, proving that $I = Rx$ as desired.
\end{proof}

\sectionnew{Poisson-normal elements in Poisson-Cauchon extensions}
Throughout this section, keep the assumptions of Section \ref{construct}, so that $R = A[X;\sig,\de]_p$ is a Poisson-Cauchon extension. Assume in addition that $A$ is a noetherian domain.

\subsection{$\HH$-primes in Poisson-Cauchon extensions}

\begin{lemma}  \label{PHcontract}
{\rm(a)} Every Poisson $\HH$-prime ideal of $R$ contracts to a $\de$-stable Poisson $\HH$-prime ideal of $A$.

{\rm(b)} For any $\de$-stable Poisson $\HH$-prime ideal $P_0$ of $A$, there are at most two Poisson $\HH$-prime ideals of $R$ that contract to $P_0$ in $A$. There is always at least one, namely $RP_0$.
\end{lemma}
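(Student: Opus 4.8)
The plan is to dispatch (a) and the existence assertion quickly, since all the content lies in the ``at most two'' bound.

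For (a), let $Q$ be a Poisson $\HH$-prime of $R$. First note $Q\cap A$ is $\HH$-stable and Poisson, because $\{A,Q\cap A\}\subseteq\{R,Q\}\cap A\subseteq Q\cap A$, and prime, because $R=A[X]$ is noetherian (so $Q$ is prime by Lemma \ref{torusPprime}) and contractions of primes are prime. For $\de$-stability I would use, for $a\in Q\cap A$, the identity $\de(a)=\{X,a\}-\sig(a)X$: here $\{X,a\}\in Q$ since $Q$ is a Poisson ideal containing $a$, while $\sig(a)=h_\circ{\cdot}a\in Q\cap A$ since $Q$ is $\HH$-stable, hence $\hfrak$-stable, and $a\in Q$; thus $\sig(a)X\in Q$ and $\de(a)\in Q\cap A$. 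Lemma \ref{torusPprime} applied to $A$ then shows $Q\cap A$ is a $\de$-stable Poisson $\HH$-prime. For existence in (b), $P_0$ is $\sig$-stable (as $\sig=h_\circ{\cdot}$ and $P_0$ is $\HH$-stable) and $\de$-stable, so $R/RP_0\cong(A/P_0)[X;\ol\sig,\ol\de]_p$ as Poisson algebras; hence $RP_0$ is a proper, $\HH$-stable, prime (the quotient is a domain) Poisson ideal contracting to $P_0$.

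For the bound, I would first reduce to $P_0=0$ by passing to $R/RP_0$, a Poisson-Cauchon extension of the noetherian domain $A/P_0$, and so assume $A$ is a domain and count Poisson $\HH$-primes $Q$ with $Q\cap A=0$. Localizing at the set $\E$ of nonzero $\HH$-eigenvectors of $A$ and writing $A_0:=A[\E^{-1}]$ (a graded field, by Theorem \ref{Pstratif}(a) applied to $A$), localization and contraction give a bijection between such $Q$ and the $\HH$-stable Poisson primes $Q'$ of $R':=A_0[X;\sig,\de]_p$ with $Q'\cap A_0=0$. Since $A_0$ is a graded field, $R'$ is a graded principal ideal domain: graded Euclidean division by a graded-monic polynomial shows every $\HH$-stable ideal is generated by one $\HH$-eigenvector, so the nonzero $\HH$-stable Poisson primes are exactly the $(f)$ with $f$ a graded-irreducible eigenvector for which $(f)$ is a Poisson ideal. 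It therefore suffices to prove there is \emph{at most one} such $f$, which yields the two primes $0$ and $(f)$, pulling back to $RP_0$ and one further prime (when $\de\equiv0$ on $A_0$ the unique $f$ is $X$, recovering $RX+RP_0$).

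For uniqueness, suppose $(f),(g)$ are distinct nonzero $\HH$-stable Poisson primes; as distinct primes of a PID they are coprime. A degree count shows $\{f,g\}$ has $X$-degree strictly below $\deg f+\deg g$ (the top Poisson bracket $\{X^{\deg f},X^{\deg g}\}$ vanishes), whereas Poissonness of $(f)$ and $(g)$ forces $\{f,g\}\in(f)\cap(g)=(fg)$; hence $\{f,g\}=0$. With $n=\deg f$, $m=\deg g$, the top-degree part of Poisson-normality gives $\{c,f\}\equiv-n\sig(c)f$ and $\{c,g\}\equiv-m\sig(c)g$, from which $\phi:=g^{\,n}/f^{\,m}\in\Fract R'$ satisfies $\{c,\phi\}=0$ for all $c\in A_0$.

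The crux, which I expect to be the main obstacle, is to deduce from $\{A_0,\phi\}=0$ that $\phi$ is free of $X$, i.e. $\phi\in A_0$; this contradicts coprimality by forcing $\deg f=\deg g=0$. Equivalently, one must rule out nonconstant-in-$X$ elements of $A_0(X)$ that Poisson-commute with $A_0$ — a non-degeneracy of the bracket in the $X$-direction stemming from $\de\neq0$ via $\{c,X\}=-\sig(c)X-\de(c)$. When there is a Poisson-central $c_0\in A_0$ with $\de(c_0)\neq0$ this is immediate: $\{c_0,-\}$ acts on $A_0(X)$ as $\ell(X)\,\tfrac{d}{dX}$ with $\ell(X)=-\sig(c_0)X-\de(c_0)\neq0$, whose kernel is $A_0$. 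The genuine difficulty is the general case, where the Poisson centre of $A_0$ may lie in $\ker\de$; here I would analyse the centralizer of $A_0$ in $A_0(X)$ through a leading-coefficient (in $X$) filtration, using the compatibility identity \eqref{delta.cond} to propagate the top-degree relation $\{c,w_q\}=q\,\sig(c)w_q$ downward and thereby exclude any nonconstant centralizing $w$.
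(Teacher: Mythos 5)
Your part (a) and the existence of $RP_0$ match the paper's own argument and are correct. For the ``at most two'' bound, however, there is a genuine gap, and it sits exactly where you flag it. The paper does not prove this bound from first principles: it notes that $(h_\circ\cdot)$ is a derivation on all of $R$ with $(h_\circ\cdot)|_A=\sig$ and $h_\circ\cdot X=\la_\circ X$, $\la_\circ\neq0$, and then quotes \cite[Proposition 1.2]{KGSL}, whose proof works over $\Fract A$ and applies $h_\circ\cdot$ and the maps $\{-,a\}$ to a monic element of minimal degree in a putative second prime, using $\la_\circ\neq 0$ to force the coefficients. Your reduction, by contrast, funnels everything into the claim that $\{A_0,\phi\}=0$ forces $\phi\in A_0$, and nothing in the route to that claim invokes either defining hypothesis of a Poisson--Cauchon extension --- the local nilpotence of $\de$ or the existence of $h_\circ$ with nonzero eigenvalue on $X$. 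Those hypotheses are not decoration: for $\KK[X]$ with zero bracket and trivial torus action, every prime is a Poisson $\HH$-prime and infinitely many contract to $0$, so the bound fails and your crux implication fails with it (every $\phi$ Poisson-commutes with $A_0$ there). Hence any correct completion must feed $\la_\circ\neq0$ and/or the nilpotence of $\de$ into the final step; the ``propagate the top-degree relation downward'' plan is a hope rather than an argument, and the one case you do settle (a Poisson-central $c_0\in A_0$ with $\de(c_0)\neq0$) is far from the general one. Note also that you cannot rescue the argument by invoking Proposition \ref{R*notPHsimple}(c): in the paper that uniqueness statement is \emph{deduced from} Lemma \ref{PHcontract}(b), so appealing to it here would be circular.

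The pieces of your sketch that do work are worth keeping: the reduction to $P_0=0$, the graded Euclidean division showing every $\HH$-stable ideal of $A_0[X;\sig,\de]_p$ is principal on a homogeneous generator, the vanishing $\{f,g\}=0$ via the degree count against $(f)\cap(g)=(fg)$, and the exact identity $\{c,f\}=-n\sig(c)f$ for a monic generator of a Poisson ideal. But the decisive uniqueness step remains open. The most economical repair is simply to do what the paper does --- cite \cite[Proposition 1.2]{KGSL} --- or to replicate its proof: pass to $A^*=\Fract A$, take monic minimal-degree elements of two putative nonzero $\HH$-stable Poisson primes, and exploit $h_\circ\cdot f-n\la_\circ f$ and $\{f,a\}-n\sig(a)f$ having degree $<n$ (hence vanishing by minimality) to show both primes are generated by the same degree-one element $X-d$.
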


\begin{proof} Recall that any $\HH$-stable ideal of $R$ or $A$ is also $\hfrak$-stable.

(a) If $Q$ is a Poisson $\HH$-prime ideal of $R$, then clearly $Q \cap A$ is a Poisson $\HH$-prime ideal of $A$. Given $a\in Q\cap A$, we have $\{ X,a \} \in \{X, Q \} \subseteq Q$ and $\sig(a) = h_\circ \cdot a \in h_\circ \cdot Q \subseteq Q$, whence
$$
\de(a) = \{ X,a \} - \sig(a) X \in Q
$$
and thus $\de(a) \in Q\cap A$.

(b) Since the action of $\hfrak$ on $R$ is by Poisson derivations (Proposition \ref{h.action}(a)), the action of $(h_\circ\cdot)$ on $R$ is, in particular, a derivation. Moreover, by assumption $(h_\circ\cdot)|_A = \sig$ and $h_\circ \cdot X = \la_\circ X$ with $\la_\circ \in \kx$. Thus, \cite[Proposition 1.2]{KGSL} implies that there are at most two $(h_\circ\cdot)$-stable Poisson prime ideals of $R$ that contract to $P_0$. Consequently, there are at most two Poisson $\HH$-prime ideals of $R$ that contract to $P_0$ in $A$.

Since $P_0$ is stable under both $\sig = (h_\circ \cdot)$ and $\de$, the induced ideal $RP_0$ is stable under $\{X,-\}$, and hence $RP_0$ is a Poisson ideal of $R$. It is clearly also an $\HH$-prime ideal, and it contracts to $P_0$ in $A$.
\end{proof}

We shall need 
\begin{observation}  \label{obs.minHprime}
It is immediate from the definition that the $\HH$-core of a prime ideal of $A$ (or $R$) is an $\HH$-prime ideal. Invoking \cite[Proposition II.2.9]{BrGo}, we conclude that
\begin{enumerate}
\item[(i)] If $P$ is a prime ideal of $A$ (or $R$), then its $\HH$-core $(P:\HH)$ is a prime and $\HH$-prime ideal of $A$ (or $R$).
\end{enumerate}
It follows from (i) that
\begin{enumerate}
\item[(ii)] If $I$ is an $\HH$-stable ideal of $A$ (or $R$), then all prime ideals minimal over $I$ are $\HH$-prime.
\end{enumerate}
\end{observation}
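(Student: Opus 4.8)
The plan is to establish (i) by a direct maximality argument from Definition \ref{Hcore} and then to deduce (ii) from (i) by a minimality argument, using \cite[Proposition II.2.9]{BrGo} in each case to pass freely between ``$\HH$-prime'' and ``$\HH$-stable prime''. Since $A$ and $R$ each carry a rational action of the torus $\HH$ and are noetherian, that proposition applies to both, so I would argue uniformly; write $S$ for either $A$ or $R$.

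For (i), let $P$ be a prime ideal of $S$. By Definition \ref{Hcore}, $(P:\HH) = \bigcap_{h\in\HH} h\cdot P$ is the largest $\HH$-stable ideal contained in $P$, and it is proper since it sits inside the proper ideal $P$. To see it is $\HH$-prime, I would suppose $I_1 I_2 \subseteq (P:\HH)$ for $\HH$-stable ideals $I_1, I_2$ of $S$. Then $I_1 I_2 \subseteq P$, so primeness of $P$ forces $I_j \subseteq P$ for some $j$; as $I_j$ is $\HH$-stable, maximality of the $\HH$-core gives $I_j \subseteq (P:\HH)$. Thus $(P:\HH)$ is $\HH$-prime, and \cite[Proposition II.2.9]{BrGo} then shows it is also prime, proving (i).

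For (ii), let $I$ be an $\HH$-stable ideal of $S$ and let $Q$ be a prime minimal over $I$. The goal is to show $Q$ is $\HH$-stable, after which \cite[Proposition II.2.9]{BrGo} upgrades $Q$ to an $\HH$-prime ideal. Since $I$ is $\HH$-stable and $I \subseteq Q$, maximality of the $\HH$-core yields $I \subseteq (Q:\HH) \subseteq Q$. By (i), $(Q:\HH)$ is a prime ideal, hence a prime lying between $I$ and $Q$; minimality of $Q$ over $I$ then forces $(Q:\HH) = Q$, i.e.\ $Q$ is $\HH$-stable.

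I expect no genuine obstacle: both parts are maximality/minimality manipulations, and the only substantive input is the cited equivalence of $\HH$-prime and $\HH$-stable prime ideals. The single point demanding care is in (ii), where minimality of $Q$ must be applied to $(Q:\HH)$ \emph{as a prime ideal} — a fact supplied precisely by the primeness half of (i). Without it, one could only conclude that $(Q:\HH)$ is an $\HH$-stable ideal squeezed between $I$ and $Q$, which is not enough to invoke minimality of $Q$ over $I$ within $\Spec S$.
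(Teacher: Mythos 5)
Your argument is correct and takes essentially the same route as the paper: the paper leaves part (i) as ``immediate from the definition'' plus the appeal to \cite[Proposition II.2.9]{BrGo}, and deduces (ii) by exactly the minimality argument you give, applying minimality of $Q$ over $I$ to the prime ideal $(Q:\HH)$. You have simply written out the steps the paper treats as routine, and correctly identified that the primeness of $(Q:\HH)$ (not merely its $\HH$-stability) is what makes the minimality argument in (ii) go through.
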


Set $A^* := \Fract A$ and $R^* := A^*[X]$. Since $A^*$ and $R^*$ are localizations of $A$ and $R$, the Poisson structures on $A$ and $R$ extends uniquely to Poisson structure on $A^*$ and $R^*$. As is easily checked, $A^*$ is a Poisson subalgebra of $R^*$, and $\{ X, A^* \} \subseteq A^* + A^* X$. Hence, $R^*$ is a Poisson polynomial ring $A^*[X; \sig^*, \de^*]_p$ for suitable $\sig^*$, $\de^*$. In particular, $\sig^*$ and $\de^*$ are derivations on $A^*$ extending $\sig$ and $\de$, so they are the unique extensions determined by the quotient rule. We thus label these extensions $\sig$ and $\de$ as well, and write $R^* = A^*[X; \sig, \de]_p$.

The actions of $\HH$ on $A$ and $R$ extend uniquely to Poisson actions on $A^*$ and $R^*$. However, these actions are not rational (unless $\HH$ acts trivially), so we cannot differentiate them to obtain $\hfrak$-actions. On the other hand, the $\hfrak$-actions on $A$ and $R$ do extend (uniquely) to actions on $A^*$ and $R^*$ by derivations, via the quotient rule. It can be checked that the actions of $\hfrak$ on $A^*$ and $R^*$ are by Poisson derivations, but we do not require these actions here.

Let us say that the algebra $R^*$ is \emph{$\HH$-Poisson simple} in case the only $\HH$-stable Poisson ideals of $R^*$ are $0$ and $R^*$. Thus, in view of Lemma \ref{torusPprime}, $R^*$ is $\HH$-Poisson simple if and only if $0$ is the unique Poisson $\HH$-prime ideal of $R$.

\begin{proposition}  \label{R*notPHsimple}
Suppose that $R^*$ is not $\HH$-Poisson simple.

{\rm(a)} There is a unique element $d\in A^*$ such that
\begin{enumerate}
\item[\rm(i)] $d$ is $X(\HH)$-homogeneous with $\al\cdot d = \chi_X(\al) d$ for all $\al \in \HH$.
\item[\rm(ii)] $\{ d,a \} = \sig(a) d + \de(a)$ for all $a\in A^*$.
\end{enumerate}
 In particular, $X-d$ is an $\HH$-eigenvector with the same $\HH$-eigenvalue as $X$.
 
 {\rm(b)} $\sig(d) = \la_\circ d$ and $\de(d) = - \la_\circ d^2$.

{\rm(c)} There is a unique nonzero Poisson $\HH$-prime ideal in $R^*$, namely $R^*(X-d)$.

{\rm(d)} Let $I^*$ be a proper nonzero $\HH$-stable Poisson ideal of $R^*$, let $n$ be the minimum degree for nonzero elements of $I^*$, and let $f = X^n + cX^{n-1} + \text{\rm[lower terms]}$, with $c\in A^*$, be a monic element of $I^*$ with degree $n$. Then $n>0$ and $d = (-1/n) c$.
\end{proposition}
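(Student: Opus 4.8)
The plan is to manufacture $d$ from a minimal-degree element of \emph{any} proper nonzero $\HH$-stable Poisson ideal of $R^*$, one of which exists precisely because $R^*$ is not $\HH$-Poisson simple. Fix such an ideal $I^*$, let $n$ be the minimal degree among its nonzero elements, and note first that $n>0$: an element of degree $0$ lies in the field $A^*$, hence would be invertible and force $I^*=R^*$. Because $A^*$ is a field, $I^*$ contains a monic $f=X^n+cX^{n-1}+[\text{lower terms}]$ of degree $n$, and this $f$ is \emph{unique}, since the difference of two such monic elements lies in $I^*$ and has degree $<n$, hence vanishes. I will set $d:=(-1/n)c$ and verify (i)--(ii), deferring uniqueness of $d$ to the end.

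The engine throughout is that applying a Poisson bracket or the derivation $h_\circ\cdot{}$ to $f$ lands back in $I^*$ and, after subtracting a suitable $A^*$-multiple of $f$ to cancel the degree-$n$ part, produces an element of $I^*$ of degree $<n$, which must therefore be $0$. Applied to $h_\circ\cdot f-n\la_\circ f$ this yields $\sig(c)=\la_\circ c$, that is, $\sig(d)=\la_\circ d$. Applied to $\{f,a\}-n\sig(a)f$ (note $\{f,a\}\in I^*$ because $I^*$ is a Poisson ideal) and comparing coefficients of $X^{n-1}$, it yields $\{c,a\}=\sig(a)c-n\de(a)$, which rearranges to (ii): $\{d,a\}=\sig(a)d+\de(a)$ for all $a\in A^*$. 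Homogeneity (i) comes for free from uniqueness of $f$: for $\al\in\HH$ the element $\chi_X(\al)^{-n}\,\al\cdot f$ is again monic of degree $n$ in the $\HH$-stable ideal $I^*$, hence equals $f$; comparing coefficients of $X^{n-1}$ gives $\al\cdot c=\chi_X(\al)c$, so $d$ is homogeneous of weight $\chi_X$ and $X-d$ is an $\HH$-eigenvector of the same weight as $X$.

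Part (b) is then short: $\sig(d)=\la_\circ d$ is already established, and putting $a=d$ in (ii) together with $\{d,d\}=0$ gives $0=\sig(d)d+\de(d)=\la_\circ d^2+\de(d)$, so $\de(d)=-\la_\circ d^2$. For uniqueness of $d$, suppose $d_1,d_2$ both satisfy (i)--(ii) and set $z:=d_1-d_2$; subtracting the two instances of (ii) gives $\{z,a\}=\sig(a)z$ for all $a$, while (i) forces $z$ to have weight $\chi_X$ and hence $h_\circ$-eigenvalue $\la_\circ$, that is, $\sig(z)=\la_\circ z$. Taking $a=z$ gives $0=\{z,z\}=\sig(z)z=\la_\circ z^2$, and since $A^*$ is a domain with $\la_\circ\neq0$ we conclude $z=0$. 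Because the entire construction applies verbatim to an arbitrary $I^*$ and produces an element $(-1/n)c$ satisfying (i)--(ii), uniqueness forces $(-1/n)c=d$; this is exactly (d).

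Finally, for (c) the clean device is to change variable to $Y:=X-d$. Then (ii) gives $\{Y,a\}=\sig(a)Y$, so $\de$ disappears and $R^*=A^*[Y;\sig]_p$, with $Y$ an $h_\circ$-eigenvector of eigenvalue $\la_\circ$ and weight $\chi_X$. As $\{Y,a\}=\sig(a)Y$ and $\{Y,Y\}=0$, the element $Y$ is Poisson-normal, so $R^*(X-d)=R^*Y$ is a Poisson ideal; it is $\HH$-stable because $Y$ is an eigenvector, and it is prime, indeed maximal, since $R^*/R^*Y\cong A^*$ is a field. Thus $R^*(X-d)$ is a nonzero Poisson $\HH$-prime. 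For uniqueness, let $Q^*$ be any nonzero Poisson $\HH$-prime of $R^*$; by (d) its minimal monic element has $X^{n-1}$-coefficient $-nd$, so in the variable $Y$ it reads $g=Y^n+[\text{degree}\le n-2]$ with vanishing $Y^{n-1}$-coefficient. Rerunning the minimality argument in $A^*[Y;\sig]_p$ now forces $g=Y^n$: the bracket $\{Y,g\}\in Q^*$ has degree $<n$ and so vanishes, which makes $\sig$ annihilate every lower coefficient, while the $h_\circ$-eigenvector relation makes each such coefficient an eigenvector for a nonzero eigenvalue, so all lower coefficients are $0$. Primeness then yields $Y\in Q^*$, whence $R^*Y\subseteq Q^*$, and maximality of $R^*Y$ forces $Q^*=R^*(X-d)$. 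I expect this last step to be the main obstacle: one must see that the substitution $Y=X-d$ truly eliminates the $\de$-part and then correctly combine the two independent constraints -- from $\{Y,-\}$ and from $h_\circ\cdot{}$ -- to collapse the minimal element to a single power of $Y$, and keeping the coefficient bookkeeping straight under the repeated minimality reductions is where the care lies.
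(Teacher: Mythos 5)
Your proof is correct, and its constructive core---extracting $d=(-1/n)c$ from a minimal-degree monic element of an arbitrary proper nonzero $\HH$-stable Poisson ideal, verifying (a)(ii), (a)(i) and (b) by the degree-drop computations with $\{f,a\}-n\sig(a)f$, $\al\cdot f-\chi_X(\al)^nf$ and $h_\circ\cdot f-n\la_\circ f$, then passing to $Y=X-d$ so that $R^*=A^*[Y;\sig]_p$ with $R^*Y$ a Poisson $\HH$-prime---is exactly the paper's. Where you genuinely diverge is in the two uniqueness claims. For uniqueness of $d$ you argue directly: $z=d_1-d_2$ satisfies $\{z,a\}=\sig(a)z$ and $\sig(z)=\la_\circ z$, so $0=\{z,z\}=\la_\circ z^2$ forces $z=0$; the paper instead deduces uniqueness of $d$ as a corollary of uniqueness in (c). For uniqueness in (c), the paper contracts $Q^*$ to $R$ and invokes Lemma \ref{PHcontract}(b) (hence, ultimately, an external result on $(h_\circ\cdot)$-stable Poisson primes of Poisson-Ore extensions), whereas you stay inside $A^*[Y;\sig]_p$: writing the minimal monic element as $g=Y^n+\sum_{i\le n-2}g_iY^i$, the relation $\{Y,g\}=\sum_i\sig(g_i)Y^{i+1}=0$ gives $\sig(g_i)=0$ while $h_\circ\cdot g-n\la_\circ g=0$ gives $\sig(g_i)=(n-i)\la_\circ g_i$ with $n-i\ge 2$, so $g_i=0$, $g=Y^n$, and primeness plus maximality of $R^*Y$ finish the argument. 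Your route is self-contained and arguably cleaner; the paper's is shorter given that Lemma \ref{PHcontract} is already needed elsewhere. Two small points you should make explicit, since the $\HH$-action on $A^*$ and $R^*$ is not rational and Proposition \ref{h.action}(c) does not apply verbatim: that an element of $A^*$ of $\HH$-weight $\chi_X$ has $h_\circ$-eigenvalue $\la_\circ$ (write it as a quotient of $\HH$-eigenvectors of $A$ and use the quotient rule), and that an $\HH$-stable ideal of $R^*$ is $h_\circ$-stable (it is generated by its $\HH$-stable, hence $\hfrak$-stable, contraction to $R$). The paper glosses over the same points, so this is a matter of polish rather than a gap.
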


\begin{proof} Since $R^*$ is not $\HH$-Poisson simple, it contains a proper nonzero $\HH$-stable Poisson ideal $I^*$. Let $n$, $f$, and $c$ be as in part (d). For any $a\in A^*$, the ideal $I^*$ contains the polynomial
$$
\{ f,a \} - n \sig(a) f = [ n \de(a) + \{ c,a \} + c (n-1) \sig(a) - n \sig(a) c] X^{n-1} + \text{[lower terms]},
$$
which must vanish due to the minimality of $n$, and so $n \de(a) + \{ c,a \} = c \sig(a)$. Hence, the element
$$
d := \frac{-1}n c
$$
satisfies condition (a)(ii). 

Set $Y := X - d$. Then $\{ Y,a \} = \sig(a) Y$ for $a\in A^*$, so $R^*$ is a Poisson polynomial algebra of the form $A^*[Y;\sig]_p$. In particular, $Y$ is Poisson-normal in $R^*$, so that $R^*Y$ is a Poisson ideal of $R^*$. It is also a prime ideal.

Returning to the polynomial $f$, note that 
\begin{align*}
(\al \cdot f) - \chi_X(\al)^n f &= [ (\al \cdot c) \chi_X(\al)^{n-1} - \chi_X(\al)^n c ] X^{n-1} + \text{[lower terms]} \qquad \forall\, \al \in \HH  \\
(h_\circ \cdot f) - n \la_\circ f &= [ (h_\circ \cdot c) + (n-1) c \la_\circ - n \la_\circ c ] X^{n-1} + \text{[lower terms]}.
\end{align*}
Since these polynomials lie in $I^*$, the minimality of $n$ now implies that $(\al \cdot c) = \chi_X(\al) c$ for $\al \in \HH$ and $h_\circ \cdot c = \la_\circ c$. Consequently, condition (a)(i) holds for $d$ and $\sig(d) = \la_\circ d$. Thus, $Y$ is an $\HH$-eigenvector with the same $\HH$-eigenvalue as $X$. In particular, it follows that $R^*Y$ is $\HH$-stable. Therefore $R^*Y$ is a Poisson $\HH$-prime ideal of $R^*$. Further, $\de(d) = \{d,d\} - \sig(d)d = - \la_\circ d^2$, verifying part (b).

It only remains to prove the uniqueness statements in parts (a) and (c). Since $A^*$ is a field, any nonzero Poisson $\HH$-prime ideal $Q^*$ of $R^*$ contracts to $0$ in $A^*$. Hence, $Q^* \cap R$ and $R^*Y \cap R$ are nonzero Poisson $\HH$-prime ideals of $R$ that contract to $0$ in $A$. Lemma \ref{PHcontract}(b) implies that $Q^* \cap R = R^*Y \cap R$, and so $Q^* = R^*Y$. This establishes part (c).

Finally, suppose that $d'$ is an element of $A^*$ satisfying conditions (i),(ii). If $Y' := X-d'$, we observe as above that $R^*Y'$ is a Poisson $\HH$-prime ideal of $R^*$. By the uniqueness result of part (c), $R^*Y' = R^*Y$. Since $Y'$ and $Y$ are monic of degree $1$, they must be equal, whence $d'=d$. This completes the proof of part (a).
\end{proof} 

Whenever $R^*$ is not $\HH$-Poisson simple, we keep the notation $d$ for the unique element of $A^*$ described in Proposition \ref{R*notPHsimple}(a). Note that $R^* = A^*[X-d; \sig]_p$ in this case, and that the analogs of Lemma \ref{PHcontract}(a) and Observation \ref{obs.minHprime}(i),(ii) hold for $R^*$ and $A^*$.

\begin{corollary}  \label{R*notPHsimple2}
If $R^*$ is not $\HH$-Poisson simple, then $R^*(X-d) \cap R$ is the unique nonzero Poisson $\HH$-prime ideal of $R$ that contracts to $0$ in $A$. Moreover, any $\HH$-stable Poisson ideal of $R$ that contracts to $0$ in $A$ is contained in $R^*(X-d) \cap R$.
\end{corollary}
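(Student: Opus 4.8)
The plan is to pass to the localization $R^* = A^*[X]$ and to exploit the fact that $R^*$ is the localization of $R$ at the multiplicative set $S := A \setminus \{0\}$. The first thing to record is that a prime ideal of $R$ contracts to $0$ in $A$ precisely when it is disjoint from $S$, so that contraction and extension give mutually inverse bijections between the prime ideals of $R$ contracting to $0$ in $A$ and the prime ideals of $R^*$. Since $R = A[X]$ is a noetherian domain and since these bijections carry Poisson ideals to Poisson ideals (localizations and contractions of Poisson ideals are Poisson) and $\HH$-stable ideals to $\HH$-stable ideals, they restrict to a bijection between the Poisson $\HH$-prime ideals of $R$ contracting to $0$ in $A$ and the Poisson $\HH$-prime ideals of $R^*$. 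Here I use that a prime, $\HH$-stable, Poisson ideal is automatically Poisson $\HH$-prime.

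For the first assertion I would invoke Proposition \ref{R*notPHsimple}(c), which says that $R^*$ has a unique nonzero Poisson $\HH$-prime ideal, namely $R^*(X-d)$. As $R$ is a domain, a nonzero prime of $R$ contracting to $0$ extends to a nonzero prime of $R^*$, so the bijection above forces $R^*(X-d) \cap R$ to be the unique nonzero Poisson $\HH$-prime ideal of $R$ contracting to $0$ in $A$. One checks directly that $R^*(X-d) \cap R$ has the required features: it is $\HH$-stable and Poisson by contraction, prime because $R^*(X-d)$ is, nonzero after clearing the denominator of $d$ (so that some nonzero multiple of $X-d$ lies in $R$), and it meets $A$ in $0$. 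Alternatively, the uniqueness here follows immediately from Lemma \ref{PHcontract}(b) applied to $P_0 = 0$: the at-most-two Poisson $\HH$-prime ideals contracting to $0$ are then $0 = R\cdot 0$ and $R^*(X-d)\cap R$.

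For the second assertion, let $I$ be an $\HH$-stable Poisson ideal of $R$ with $I \cap A = 0$; the case $I = 0$ being trivial, assume $I \neq 0$. Extending to $R^*$, the ideal $R^* I$ is a nonzero $\HH$-stable Poisson ideal of $R^*$, and it is proper precisely because $I \cap S = \emptyset$. Since $R^*$ is noetherian, I would consider the prime ideals minimal over $R^* I$. By the analog of Observation \ref{obs.minHprime}(ii) for $R^*$ (noted just before the statement) together with Lemma \ref{Pprimeinfo}(a), each such minimal prime is simultaneously $\HH$-prime and Poisson, hence a nonzero Poisson $\HH$-prime ideal of $R^*$; the uniqueness in Proposition \ref{R*notPHsimple}(c) then forces every one of them to equal $R^*(X-d)$. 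Therefore $\sqrt{R^* I} = R^*(X-d)$, and in particular $R^* I \subseteq R^*(X-d)$, whence $I \subseteq R^* I \cap R \subseteq R^*(X-d) \cap R$, as required.

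The routine bookkeeping — that contraction and extension behave as claimed with respect to the Poisson and $\HH$ structures, and that ``$I$ contracts to $0$ in $A$'' is equivalent to properness of $R^* I$ — is straightforward. The one point demanding care is that the $\HH$-action on $R^*$ is \emph{not} rational, so Lemma \ref{torusPprime} is unavailable there; this is exactly why I would establish Poisson $\HH$-primeness of the minimal primes over $R^* I$ by hand, combining the cited analog of Observation \ref{obs.minHprime}(ii) with the general fact (Lemma \ref{Pprimeinfo}(a)) that a prime minimal over a Poisson ideal is Poisson, rather than appealing to the rational-action machinery.
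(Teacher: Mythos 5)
Your argument is correct and follows essentially the same route as the paper: for uniqueness, localize/contract between $R$ and $R^*$ and invoke Proposition \ref{R*notPHsimple}(c); for the containment, extend $I$ to the proper nonzero ideal $R^*I$, identify its minimal primes as Poisson $\HH$-primes via Lemma \ref{Pprimeinfo}(a) and the $R^*$-analog of Observation \ref{obs.minHprime}(ii), and conclude they all equal $R^*(X-d)$. Your explicit remark about avoiding Lemma \ref{torusPprime} (since the $\HH$-action on $R^*$ is not rational) matches exactly the care the paper takes at this point.
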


\begin{proof} On one hand, $P^* := R^*(X-d)$ is a nonzero Poisson $\HH$-prime ideal of $R^*$ that contracts to zero in $A^*$, whence $P^*\cap R$ is a nonzero Poisson $\HH$-prime ideal of $R$ that contracts to $0$ in $A$. On the other hand, any nonzero Poisson $\HH$-prime ideal $Q$ of $R$ with $Q\cap A = 0$ localizes to a nonzero Poisson $\HH$-prime ideal $R^*Q$ in $R^*$, whence $R^*Q = P^*$ and thus $Q = R^*Q\cap R = P^* \cap R$.

Similarly, any $\HH$-stable Poisson ideal $I$ of $R$ with $I\cap R =0$ localizes to an $\HH$-stable Poisson ideal $R^*I$ of $R^*$. Since $I\cap R =0$, we must have $R^*I \ne R^*$, whence there is at least one prime ideal $Q^*$ of $R^*$ minimal over $R^*I$. Then $Q^*$ is a Poisson $\HH$-prime ideal by Lemma \ref{Pprimeinfo}(a) and Observation \ref{obs.minHprime}(ii), whence $Q^* = P^*$. Therefore $I \subseteq R^*I \cap R \subseteq P^*\cap R$.
\end{proof}

\subsection{Some Poisson-normal $\HH$-eigenvectors}

We upgrade \cite[Lemma 4.11(a)]{GY.pcluster} in the same manner as \cite[Lemma 2.3]{KGSL2}:

\begin{lemma}  \label{Richard.upgrade}
Let $\de$ be a derivation on a domain $C$ of characteristic zero, and suppose $e,f \in C$ with $\de(e) = ef$ or $\de(e) = fe$. If there is some $m \in \Znn$ such that $\de^m(e) = \de^m(f) = 0$, then $\de(e) = 0$.
\end{lemma}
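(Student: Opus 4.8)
The plan is to attach to each nonzero $\de$-nilpotent element of $C$ a ``degree'' that behaves like the degree of a polynomial under $d/dt$, and then simply compare degrees on the two sides of $\de(e)=ef$. Since $C$ is commutative in our setting we have $ef=fe$, so the two alternatives in the hypothesis coincide; in any event the argument below is insensitive to the order of the factors.

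First I would dispose of the trivial cases: if $e=0$ or $f=0$ then $\de(e)=ef=0$ and we are done, so I will assume $e\neq0$ and $f\neq0$ and aim for a contradiction. Both $e$ and $f$ are $\de$-nilpotent by hypothesis (they are killed by $\de^m$), and more generally for each nonzero $\de$-nilpotent $x\in C$ the quantity $\nu(x):=\max\{\,n\in\Znn : \de^n(x)\neq0\,\}$ is a well-defined nonnegative integer (nonempty because $\de^0(x)=x\neq0$, finite because $x$ is $\de$-nilpotent); set $\nu(0)=-\infty$.

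The heart of the matter is two properties of $\nu$. The first is that $\de$ lowers $\nu$ by exactly one: if $x\neq0$ and $\de(x)\neq0$, then since $\de^n(\de(x))=\de^{n+1}(x)$ we get $\nu(\de(x))=\nu(x)-1$, and in particular $\nu(x)\geq1$. The second, which I expect to be the only point requiring real care, is multiplicativity: $\nu(xy)=\nu(x)+\nu(y)$ for nonzero $\de$-nilpotent $x,y$. To prove it I would write $a=\nu(x)$, $b=\nu(y)$ and apply the Leibniz formula $\de^{n}(xy)=\sum_i\binom{n}{i}\de^i(x)\,\de^{n-i}(y)$. For $n=a+b$ every summand with $i\neq a$ vanishes (if $i>a$ then $\de^i(x)=0$, while if $i<a$ then $n-i>b$ and $\de^{n-i}(y)=0$), leaving the single term $\binom{a+b}{a}\de^a(x)\,\de^b(y)$, which is nonzero because $C$ is a domain and $\binom{a+b}{a}\neq0$ in characteristic zero; and for $n=a+b+1$ the same bookkeeping shows that every summand vanishes. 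Hence $\nu(xy)=a+b$. This is exactly where the domain hypothesis and $\charr C=0$ enter; it is also the step to double-check under a noncommutative reading of the hypothesis ``$\de(e)=ef$ or $\de(e)=fe$'', where the computation still goes through since the surviving term is a product of two nonzero elements.

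With these two properties in hand the conclusion is immediate. Since $e,f\neq0$ and $C$ is a domain, $\de(e)=ef\neq0$, so $\nu(e)\geq1$, and applying both properties gives
\[
\nu(e)-1=\nu(\de(e))=\nu(ef)=\nu(e)+\nu(f).
\]
This forces $\nu(f)=-1$, contradicting $\nu(f)\geq0$. Therefore the assumption $f\neq0$ is untenable, whence $\de(e)=ef=0$, as claimed. The only genuinely substantive ingredient is the multiplicativity of $\nu$; the remainder of the argument is formal.
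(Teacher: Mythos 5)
Your proof is correct and is essentially the paper's argument in slightly different packaging: the multiplicativity of your degree function $\nu$ is exactly the paper's Leibniz-rule computation $\de^{s+t}(ef)=\tbinom{s+t}{s}\de^s(e)\de^t(f)\ne0$ (relying on the domain and characteristic-zero hypotheses in the same way), and the identity $\nu(e)-1=\nu(e)+\nu(f)$ is the same contradiction the paper reaches by observing that $\de^{s+t+1}(e)\ne0$ violates the maximality of $s$.
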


\begin{proof} It suffices to show that one of $e$ or $f$ is zero. Suppose not, and let $s,t \in \Znn$ be maximal such that $\de^s(e), \de^t(f) \ne 0$.  By Leibniz' Rule,
$$
\de^{s+t}(ef) = \sum_{i=1}^{s+t} \tbinom{s+t}{i} \de^i(e) \de^{s+t-i}(f) = \tbinom{s+t}{s} \de^s(e) \de^t(f) \ne 0,
$$
since $\binom{s+t}{s} \ne 0$ in characteristic zero. Similarly, $\de^{s+t}(fe) \ne 0$. But then $\de^{s+t+1}(e) \ne 0$, due to the assumption that $\de(e) = ef$ or $\de(e) = fe$. This contradicts the choice of $s$, since $s+t+1 > s$.
\end{proof} 

\begin{lemma}  \label{PnormalHeigen}
Assume there is a nonzero Poisson $\HH$-prime ideal $P$ in $R$ with $P\cap A =0$. Let $a\in A$ be a Poisson-normal $\HH$-eigenvector, and let $s\in \Znn$ be maximal such that $\de^s(a) \ne 0$.

{\rm(a)} If $s>0$, then $x := \theta(a) X^s$ is a Poisson-normal $\HH$-eigenvector in $R$ and $x\in P$. Moreover, $d = (\la_\circ s)^{-1} a^{-1} \de(a)$.

{\rm(b)} Now assume that $a$ is the leading coefficient of some element of $P$ with degree $1$. Then $a+P$ is a Poisson-normal $\HH$-eigenvector in $R/P$. Moreover, if also $s=0$, then $\de \equiv 0$ and $P= RX$.
\end{lemma}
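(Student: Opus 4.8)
The plan is to handle (a) first, since its coefficient formula is the tool used in (b). That $x = \theta(a)X^s$ is a Poisson-normal $\HH$-eigenvector of $R$ is exactly Proposition \ref{firstPnormal}, so I would simply cite it. To place $x$ in $P$ and to extract $d$, I would pass to $R^* = A^*[X]$, which is a PID because $A^* = \Fract A$ is a field. Since $x$ is a Poisson-normal $\HH$-eigenvector in $R$, it remains one in the localization $R^*$, so $R^*x$ is an $\HH$-stable Poisson ideal; it is proper and nonzero because $\deg x = s > 0$. Every prime minimal over $R^*x$ is then Poisson (Lemma \ref{Pprimeinfo}(a)) and $\HH$-prime (the analog for $R^*$ of Observation \ref{obs.minHprime}(ii)), hence equals the unique nonzero Poisson $\HH$-prime $R^*(X-d)$ from Proposition \ref{R*notPHsimple}(c). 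In a PID the minimal primes over $R^*x$ are generated by the distinct irreducible factors of $x$, so $X-d$ is the only such factor; comparing degrees and leading coefficients (the latter being $a$, from the expansion $x = \sum_{l=0}^s c_l\de^l(a)X^{s-l}$ with $c_0 = 1$) forces $x = a(X-d)^s$. Since $a(X-d)^s \in R^*(X-d)$ and $P = R^*(X-d)\cap R$ by Corollary \ref{R*notPHsimple2}, this gives $x\in P$. Finally, equating the coefficients of $X^{s-1}$ on the two sides of $x = a(X-d)^s$, namely $c_1\de(a) = -\la_\circ^{-1}\de(a)$ against $-sad$, yields $\de(a) = s\la_\circ ad$, i.e. $d = (\la_\circ s)^{-1}a^{-1}\de(a)$.

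For (b), I would first pin down the degree-one element of $P$. By Corollary \ref{R*notPHsimple2}, $P = R^*(X-d)\cap R$, so a degree-one element of $P$ is $u(X-d)$ with $u\in A^*$ and $u, ud\in A$; as its leading coefficient is $a$, it must be $p = a(X-d) = aX - ad$ with $ad\in A$, and so $aX \equiv ad \pmod P$. The image $a + P$ is a nonzero $\HH$-eigenvector (nonzero since $P\cap A = 0$ forces $a\notin P$), so for Poisson-normality in $R/P$ it suffices to check $\{a,X\}\in Ra + P$, the bracket against $A$ being harmless. From $\{a,X\} = -\eta aX - \de(a)$ and $-\eta aX \in Ra$, this reduces to $\de(a)\in Ra + P$. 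If $s = 0$ then $\de(a) = 0$ and there is nothing to do; if $s>0$, then part (a) gives $\de(a) = s\la_\circ ad \equiv s\la_\circ aX \pmod P$, and $s\la_\circ aX\in Ra$. Hence $a + P$ is Poisson-normal in $R/P$.

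For the last assertion, assume $s = 0$, so $\de(a) = 0$. The idea is to play $\de(d) = -\la_\circ d^2$ (Proposition \ref{R*notPHsimple}(b)) against the local nilpotence of $\de$. An easy induction gives $\de^n(d) = (-\la_\circ)^n n!\, d^{n+1}$, and since $\de^k(a) = 0$ for $k\ge 1$, Leibniz' rule collapses to $\de^n(ad) = a\de^n(d) = (-\la_\circ)^n n!\, a d^{n+1}$. But $ad\in A$ and $\de$ is locally nilpotent on $A$, so $\de^n(ad) = 0$ for $n\gg 0$; as $A^*$ is a field and $a\ne 0$, this forces $d = 0$. Then Proposition \ref{R*notPHsimple}(a)(ii) gives $\de\equiv 0$ on $A^*$, hence on $A$, and $P = R^*X\cap R = RX$.

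The main obstacle is the coefficient formula for $d$ in part (a). Mere divisibility $(X-d)\mid x$ in $A^*[X]$ produces only one scalar relation among all the $\de^l(a)$ and cannot isolate $\de(a)$; what does the work is upgrading this to $x = a(X-d)^s$, i.e. divisibility with full multiplicity $s$, and this is precisely where the PID structure of $R^*$ combined with uniqueness of the nonzero Poisson $\HH$-prime (Proposition \ref{R*notPHsimple}(c)) is indispensable. The remaining steps are bookkeeping, apart from the clean point in the $s = 0$ case, where the hypothesis $ad\in A$ is exactly the input needed to apply local nilpotence of $\de$ to the single element $ad$ and thereby force $d = 0$.
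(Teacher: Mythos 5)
Your proof is correct, and its global skeleton is the paper's: Proposition \ref{firstPnormal} for the Poisson-normality of $x$, passage to $R^*=A^*[X]$ and the unique nonzero Poisson $\HH$-prime $R^*(X-d)$, the coefficient formula for $d$, and then in (b) the reduction of Poisson-normality of $a+P$ to the single bracket $\{a,X\}$ together with the congruence $ad\equiv aX \pmod P$. Where you diverge is in the local mechanism at two points. First, to extract $d$ you prove the full factorization $x=a(X-d)^s$ via unique factorization in the PID $A^*[X]$ plus Lemma \ref{Pprimeinfo}(a), Observation \ref{obs.minHprime}(ii) and Proposition \ref{R*notPHsimple}(c); the paper instead applies Proposition \ref{R*notPHsimple}(d) to the ideal $R^*x$ (whose nonzero elements have minimum degree $s$) with monic element $a^{-1}x$, and likewise to $P^*$ with monic element $X+a^{-1}c$ in part (b). So your closing claim that the PID structure and full multiplicity are ``indispensable'' overstates things: part (d) of Proposition \ref{R*notPHsimple} isolates $\de(a)$ using only degree minimality, not divisibility with multiplicity $s$; your route is valid but re-derives, in stronger form, something already packaged for you. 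Second, in the $s=0$ case you prove $\de^n(d)=(-\la_\circ)^n n!\,d^{n+1}$ by induction and kill it against the local nilpotence of $\de$ on $ad\in A$; the paper instead observes $\de^m(d)=-a^{-1}\de^m(c)=0$ and invokes Lemma \ref{Richard.upgrade} with $\de(d)=d\cdot(-\la_\circ d)$. Your computation is essentially a hands-on special case of that lemma and is equally correct (characteristic zero is used in both, via $n!\ne 0$). Everything else, including the verification that $\{a,X\}\equiv(\eta+\la_\circ s)aX$ modulo $P$, matches the paper's argument.
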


\begin{proof} Note that $a \notin P$, because $P \cap A = 0$. The ideal $P$ localizes to a nonzero Poisson $\HH$-prime ideal $P^*$ of $R^*$ such that $P^*\cap R = P$, and $P^* = R^*(X-d)$ by Proposition \ref{R*notPHsimple}(c).

(a) By Proposition \ref{firstPnormal}, $x$ is a Poisson-normal $\HH$-eigenvector in $R$. Now $Rx$ is a nonzero $\HH$-stable Poisson ideal of $R$, and $Rx \cap A = 0$ because $\deg x = s > 0$. By Corollary \ref{R*notPHsimple2}, $Rx \subseteq P$, whence $x\in P$.

Note that $x = a X^s + c X^{s-1} + \text{[lower terms]}$, where $c = - \la_\circ^{-1} \de(a)$. The ideal $Rx$ localizes to a proper nonzero $\HH$-stable Poisson ideal $R^*x$ in $R^*$, and $s$ is the minimum degree for nonzero elements of $R^*x$. Since $a^{-1} x$ is a monic element of $R^*x$ with degree $s$, Proposition \ref{R*notPHsimple}(d) implies that $d = (-1/s) a^{-1} c = (\la_\circ s)^{-1} a^{-1} \de(a)$.

(b) By hypothesis, $aX + c \in P$ for some $c\in A$. Then $X + a^{-1}c$ is a monic element of $P^*$ with degree $1$. Since $P^*$ is proper, it contains no nonzero elements of degree $0$. Hence, we again apply Proposition \ref{R*notPHsimple}(d), obtaining $d = - a^{-1} c$. 

If $s=0$, then $\de(a) = 0$, whence $\de^m(d) = - a^{-1} \de^m(c) = 0$ for some $m \in \Znn$. Since $\de(d) = - \la_\circ d^2$ (Proposition \ref{R*notPHsimple}(b)), Lemma \ref{Richard.upgrade} implies that $\de(d) = 0$. It follows that $d = 0$, whence $\de \equiv 0$ in this case. We then have $P = R^*X \cap R = RX$. Moreover, $a=x$ and so
$$
\{ a, X \} = - \eta a X \in Ra,
$$
where $\eta$ is the $\sig$-eigenvalue of $a$ (Proposition \ref{firstPnormal}). Thus, $a$ is Poisson-normal in $R$, whence $a+P$ is Poison-normal in $R/P$.

Finally, assume that $s>0$. By part (a), we have
$$
a^{-1} c = - d = - (\la_\circ s)^{-1} a^{-1} \de(a),
$$
whence $\de(a) = - \la_\circ s c$. Since $aX+c \in P$, it follows that
$$
\{ X,a \} = \eta a X - \la_\circ s c \equiv \eta a X - \la_\circ s (-aX) = (\eta + \la_\circ s) X a \pmod{P}.
$$
As $a$ is already Poisson-normal in $A$, we conclude that $a+P$ is Poisson-normal in $R/P$.
\end{proof}

\begin{proposition}  \label{Pnormalsep1}
Assume that every nonzero Poisson $\HH$-prime ideal of $A$ contains a Poisson-normal $\HH$-eigenvector.

If $P \subsetneq Q$ are Poisson $\HH$-prime ideals of $R$ with $P \cap A = 0$, then there exists a Poisson-normal $\HH$-eigenvector $u$ of $R/P$ such that $u \in Q/P$.
\end{proposition}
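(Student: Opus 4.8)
The plan is to analyze $P\subsetneq Q$ according to the contraction $Q_0:=Q\cap A$, which by Lemma \ref{PHcontract}(a) is a $\de$-stable Poisson $\HH$-prime of $A$. Since $P\cap A=0\subseteq Q_0$, there are two regimes, and in each I produce the required element either inside $R$ (when $P=0$, so $R/P=R$) or inside $A[d]\cong R/P$ (when $P\neq 0$). Throughout, for a Poisson-normal $\HH$-eigenvector $a\in A$, I write $s\in\Znn$ for the maximal integer with $\de^s(a)\neq 0$, and I recall from Lemma \ref{max.des(a)} that $\theta(a)X^s=\sum_{l=0}^s c_l\de^l(a)X^{s-l}\in R$ with $c_0=1$.

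First suppose $Q_0\neq 0$. By hypothesis $Q_0$ contains a Poisson-normal $\HH$-eigenvector $a$. If $P=0$, I take $u:=\theta(a)X^s$: this is a nonzero Poisson-normal $\HH$-eigenvector of $R$ by Proposition \ref{firstPnormal}, and it lies in $Q$ because all its coefficients $\de^l(a)$ lie in the $\de$-stable ideal $Q_0\subseteq Q$. If instead $P\neq 0$, then $P=R^*(X-d)\cap R$ by Corollary \ref{R*notPHsimple2}, and reduction modulo $P$ identifies $R/P$ with the subalgebra $A[d]\subseteq A^*$ via $X\mapsto d$ and $A\hookrightarrow A^*$. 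Here I take $u:=a+P$, a nonzero $\HH$-eigenvector of $R/P$ lying in $Q/P$ (nonzero as $a\in A$ and $P\cap A=0$). To check Poisson-normality I combine $\{a,A\}\subseteq aA$ with the identity $\{a,d\}=-\sig(a)d-\de(a)$ in $A^*$ coming from Proposition \ref{R*notPHsimple}(a)(ii): since $\sig(a)\in\KK a$, the term $-\sig(a)d$ lies in $aA[d]$, while $\de(a)=\la_\circ s\,a d$ by Lemma \ref{PnormalHeigen}(a) when $s>0$ and $\de(a)=0$ when $s=0$; hence $\{a,d\}\in aA[d]$. As $\{a,-\}$ is a derivation carrying the generators $A$ and $d$ of $A[d]$ into the ideal $aA[d]$, it follows that $\{a,A[d]\}\subseteq aA[d]$.

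Now suppose $Q_0=0$. Then $P$ and $Q$ both contract to $0$, so by Lemma \ref{PHcontract}(b) and Corollary \ref{R*notPHsimple2} the only possibility is $P=0$ and $Q=R^*(X-d)\cap R$, the unique nonzero Poisson $\HH$-prime of $R$ over $0$; in particular $R^*$ is not $\HH$-Poisson simple and $d$ is defined. If $\de\equiv 0$, then $d=0$, $Q=RX$, and $u:=X$ serves. Assume then $\de\not\equiv 0$, so that $d\neq 0$. Reduction modulo $Q$, followed by $R/Q\cong A[d]\hookrightarrow A^*$ and extension to $\Rhat$ (legitimate as $d\neq 0$), gives a Poisson $\HH$-equivariant homomorphism $\pi:\Rhat\to A^*$ with $\pi|_A$ the inclusion and $\pi(X)=d$. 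Setting $\Theta:=\pi\circ\theta:A\to A^*$, one has $\Theta(a)=\sum_l c_l\de^l(a)d^{-l}$ and $\pi(\theta(a)X^s)=d^s\Theta(a)$. Because $A^*$ is a domain, $\ker\Theta$ is a Poisson $\HH$-prime of $A$. Granting $\ker\Theta\neq 0$, the hypothesis furnishes a Poisson-normal $\HH$-eigenvector $a\in\ker\Theta$; then $u:=\theta(a)X^s$ is a nonzero Poisson-normal $\HH$-eigenvector of $R$ by Proposition \ref{firstPnormal}, and $\pi(u)=d^s\Theta(a)=0$ places it in $Q$.

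The crux is the nonvanishing of $\ker\Theta$ in this last case. Using $\Rhat=B[X^{\pm1};\al]_p$ from Proposition \ref{theta.properties}(e), note that $\ker\pi$ is a nonzero Poisson $\HH$-prime of $\Rhat$ and that $\ker\Theta=\theta^{-1}(\ker\pi\cap B)$, so it suffices to show $\ker\pi$ meets $B$ nontrivially. I will establish the general fact that in a Poisson Laurent extension $B[X^{\pm1};\al]_p$ with $\al=(h_\circ{\cdot})|_B$ (Proposition \ref{theta.properties}(g)) and with $\la_\circ\neq 0$, every nonzero Poisson $\HH$-prime contracts nontrivially to $B$. The argument is a width-reduction on the $X$-support: after multiplying by a power of the unit $X$, choose an $\HH$-homogeneous element $f=\sum_{k=0}^n b_k X^k$ of the ideal with $b_0,b_n\neq 0$ and $n$ minimal; homogeneity forces $\al(b_k)=(\nu-k\la_\circ)b_k$ for a common scalar $\nu$, whence $X^{-1}\{X,f\}=\sum_k(\nu-k\la_\circ)b_kX^k$ and $\nu f-X^{-1}\{X,f\}=\la_\circ\sum_k k\,b_kX^k$ both lie in the ideal. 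Dividing by $\la_\circ X$ produces a nonzero $\HH$-homogeneous element $\sum_{k\ge 1}k\,b_kX^{k-1}$ of strictly smaller width (its top coefficient $n b_n$ is nonzero because $\charr\KK=0$), forcing $n=0$ and hence $b_0\in B\cap(\text{ideal})\neq 0$. Applying this to $\ker\pi$ yields $\ker\pi\cap B\neq 0$, so $\ker\Theta\neq 0$. I expect this contraction lemma to be the main obstacle, as it is precisely where $\la_\circ\neq 0$ and $\charr\KK=0$ are used and where the homogeneity bookkeeping must be carried out carefully.
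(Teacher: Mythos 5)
Your proof is correct, but in two of the three cases it follows a genuinely different route from the paper's. The agreement is in the case $P=0$, $Q\cap A\neq 0$, which both arguments handle by pushing a Poisson-normal $\HH$-eigenvector $a\in Q\cap A$ through $\theta$ and clearing denominators. For the other cases the paper's unifying device is the ideal $J$ of leading coefficients of degree-one elements of $P$ (resp.\ $Q$): it shows $J$ is a nonzero $\HH$-stable Poisson ideal, extracts a Poisson-normal $\HH$-eigenvector from a product of the primes minimal over $J\cap Q$ (resp.\ $J$), and then invokes Lemma \ref{PnormalHeigen}(b), whose hypothesis is precisely that $a$ is such a leading coefficient. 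You avoid $J$ entirely. When $P\neq 0$ you identify $R/P$ with $A[d]\subseteq A^*$ and verify normality of \emph{any} Poisson-normal $\HH$-eigenvector $a\in Q\cap A$ directly from $\{a,d\}=-\sig(a)d-\de(a)$ together with $\de(a)=\la_\circ s\,ad$ from Lemma \ref{PnormalHeigen}(a); this in effect reproves Lemma \ref{PnormalHeigen}(b) without the leading-coefficient hypothesis, which is a mild strengthening. When $Q\cap A=0$ you replace $J$ by the kernel of the evaluation map $\Theta=\pi\circ\theta$ (with $X\mapsto d$), at the cost of a new contraction lemma for $\HH$-stable Poisson ideals of $B[X^{\pm1};\al]_p$; your width-reduction argument for that lemma is sound (the displayed element $\sum_{k\ge1}k\,b_kX^{k-1}$ should be renormalized by a power of the unit $X$ before comparing widths, but that is routine), and it isolates cleanly where $\la_\circ\neq 0$ and $\charr\KK=0$ enter. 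The paper's route is shorter because Corollary \ref{R*notPHsimple2} already forces $\theta(a)X^s$ into $Q$ whenever $s>0$, so it only needs $J$ to control the $s=0$ degeneracy; your route is longer but more self-contained and makes the mechanism ($a\in\ker\Theta$ automatically has $s>0$) transparent.
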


\begin{proof} Recall that $Q\cap A$ is a $\de$-stable Poisson $\HH$-prime ideal of $A$.

Assume first that $P \ne 0$. Then $0$ and $P$ are two Poisson $\HH$-prime ideals of $R$ that contract to $0$ in $A$, so $Q \cap A \ne 0$ by Lemma \ref{PHcontract}(b).

Now $P$ localizes to a nonzero Poisson $\HH$-prime ideal $P^*$ in $R^*$, and $P^* = R^*(X-d)$ by Proposition \ref{R*notPHsimple}(c). Writing $d = bc^{-1}$ for some $b,c\in A$ with $c\ne 0$, we have $cX - b = c(X-d) \in P^*\cap R = P$. Thus, the ideal
$$
J := \{ a\in A \mid aX+e \in P\ \text{for some}\ e \in A \}
$$
is nonzero, as is then $J \cap (Q \cap A) = J \cap Q$. Note that $J \cap Q \ne A$, since $Q \ne R$. Since $P$ is $\HH$-stable, so is $J$. We also observe that $J$ is a Poisson ideal of $A$. Namely, if $a\in J$ and $f\in A$, then $aX+e \in P$ for some $e\in A$, whence $\{ aX+e, f \} \in P$. Since
\begin{align*}
\{ aX+e, f \} &= \{ a,f \} X + a \bigl( \sig(f) X + \de(f) \big) + \{ e,f \}  \\
&= \{ a,f \} X + \sig(f) (aX+e) + a \de(f) - \sig(f) e + \{ e,f \},
\end{align*}
we see that $\{ a,f \} X + (a \de(f) - \sig(f) e + \{ e,f \}) \in P$, so that $\{ a,f \} \in J$ as required. Thus, $J \cap Q$ is an $\HH$-stable Poisson ideal of $A$.

There exist prime ideals $P_1,\dots,P_r$ in $A$ minimal over $J \cap Q$ such that $P_1P_2 \cdots P_r$ is contained in $J \cap Q$. Since $J \cap Q$ is an $\HH$-stable Poisson ideal, each $P_i$ is a Poisson $\HH$-prime ideal of $A$ (Lemma \ref{Pprimeinfo}(a) and Observation \ref{obs.minHprime}(ii)). By hypothesis, each $P_i$ contains a Poisson-normal $\HH$-eigenvector $a_i$, and thus $a := a_1a_2 \cdots a_r$ is a Poisson-normal $\HH$-eigenvector of $A$ that lies in $J \cap Q$. Since $a$ is in $J$, it is the leading coefficient of an element of $P$ of degree $1$. By Lemma \ref{PnormalHeigen}(b), the coset $u := a+P$ is a Poisson-normal $\HH$-eigenvector of $R/P$. Moreover, $u \in Q/P$ because $a\in Q$.

Now assume that $P=0$. If $Q\cap A \ne 0$, then by hypothesis $Q \cap A$ contains a Poisson-normal $\HH$-eigenvector $a$ of $A$. Then $\de^l(a) \in Q \cap A$ for all $l \in \Znn$, whence the element $u := \theta(a) X^s$ lies in $Q$, where $s \in \Znn$ is minimal such that $\theta(a) X^s \in R$. By Lemma \ref{max.des(a)} and Proposition \ref{firstPnormal}, $u$ is a Poisson-normal $\HH$-eigenvector in $R$.

Finally, suppose that $Q \cap A = 0$. As above, the set
$$
J := \{ a\in A \mid aX+e \in Q\ \text{for some}\ e \in A \}
$$
is a nonzero $\HH$-stable Poisson ideal of $A$. If $J=A$, then $1\in J$, while if $J \ne A$, then $J$ contains a product of nonzero Poisson $\HH$-prime ideals of $A$. In either case, there is a Poisson-normal $\HH$-eigenvector $a$ of $A$ that lies in $J$. Let $s\in \Znn$ be maximal such that $\de^s(a) \ne 0$. 

If $s>0$, then by Proposition \ref{firstPnormal} and Lemma \ref{PnormalHeigen}(a), the element $u := \theta(a) X^s$ is a Poisson-normal $\HH$-eigenvector of $R$ that lies in $Q$. On the other hand, if $s=0$, Lemma \ref{PnormalHeigen}(b) shows that $\de \equiv 0$ and $Q = RX$. In this case, $u := X$ is a Poisson-normal $\HH$-eigenvector of $R$ that lies in $Q$.
\end{proof}

\subsection{Carrying Poisson-normal $\HH$-separation from $A$ to $R$}

Recall the concept of Poisson-normal $\HH$-separation from Definition \ref{PnormalHsep}.

\begin{theorem} \label{PnormalHsep.AtoR}
If $\HPspec A$ has Poisson-normal $\HH$-separation, then so does $\HPspec R$.
\end{theorem}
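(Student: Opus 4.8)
The plan is to reduce the statement to Proposition \ref{Pnormalsep1} by factoring out the contraction of the smaller ideal to $A$, after which no case analysis is needed. Fix a proper inclusion $P \subsetneq Q$ of Poisson $\HH$-prime ideals of $R$ and set $P_0 := P \cap A$. By Lemma \ref{PHcontract}(a), $P_0$ is a $\de$-stable Poisson $\HH$-prime ideal of $A$, and by Lemma \ref{PHcontract}(b) the induced ideal $RP_0$ is a Poisson $\HH$-prime ideal of $R$. Since $P\cap A = P_0$, we have $RP_0 \subseteq P \subseteq Q$, so passing to the quotient by $RP_0$ is harmless.

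Set $\ol{A} := A/P_0$ and $\ol{R} := R/RP_0$. Because $P_0$ is prime (Lemma \ref{torusPprime}), $\ol{A}$ is a noetherian domain, and because $P_0$ is stable under both $\sig = (h_\circ\cdot)|_A$ and $\de$, these maps descend to $\ol{\sig}, \ol{\de}$ on $\ol{A}$ so that $\ol{R} \cong \ol{A}[X; \ol{\sig}, \ol{\de}]_p$. With the induced $\HH$-action and the same $h_\circ$, this quotient is again a Poisson-Cauchon extension of a noetherian domain. Writing $\ol{P}, \ol{Q}$ for the images of $P, Q$, these form a proper inclusion of Poisson $\HH$-prime ideals of $\ol{R}$, and since $A \cap RP_0 = P_0$ we get $\ol{P} \cap \ol{A} = (P\cap A)/P_0 = 0$.

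To invoke Proposition \ref{Pnormalsep1} for $\ol{R}$ I must verify its standing hypothesis, namely that every nonzero Poisson $\HH$-prime ideal of $\ol{A}$ contains a Poisson-normal $\HH$-eigenvector. Such an ideal has the form $Q_0'/P_0$ for a Poisson $\HH$-prime ideal $Q_0' \supsetneq P_0$ of $A$, so applying the assumed Poisson-normal $\HH$-separation of $\HPspec A$ to the inclusion $P_0 \subsetneq Q_0'$ produces exactly such an eigenvector, lying in $Q_0'/P_0$. Proposition \ref{Pnormalsep1}, applied to $\ol{P} \subsetneq \ol{Q}$ (whose smaller ideal contracts to $0$ in $\ol{A}$), then yields a Poisson-normal $\HH$-eigenvector $\ol{u}$ of $\ol{R}/\ol{P}$ lying in $\ol{Q}/\ol{P}$. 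Since $\ol{R}/\ol{P} \cong R/P$ and $\ol{Q}/\ol{P} \cong Q/P$, this $\ol{u}$ is the desired Poisson-normal $\HH$-eigenvector of $R/P$ contained in $Q/P$, establishing Poisson-normal $\HH$-separation for $\HPspec R$.

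The substantive content lives entirely in Proposition \ref{Pnormalsep1}; the part requiring care here is purely structural, namely confirming that $\ol{R} = R/RP_0$ is a genuine Poisson-Cauchon extension over the noetherian domain $\ol{A}$ — that local nilpotence of $\de$, the existence of $h_\circ$ with nonzero eigenvalue on $X$, and rationality of the induced torus action all survive the quotient — and that Poisson $\HH$-prime ideals and Poisson-normal $\HH$-eigenvectors correspond correctly under $R \twoheadrightarrow \ol{R}$. I expect these descent checks to be the only delicate point, and they are routine given the framework of Section \ref{construct}.
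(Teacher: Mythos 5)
Your proposal is correct and follows essentially the same route as the paper's proof: reduce to the case $P\cap A=0$ by passing to $A/P_0$ and $R/RP_0$, observe that the Poisson-normal $\HH$-separation hypothesis on $\HPspec A$ supplies the standing hypothesis of Proposition \ref{Pnormalsep1} in the quotient, and then apply that proposition. The paper's proof is just a terser version of yours, leaving the descent checks you flag as implicit.
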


\begin{proof} Let $P \subsetneq Q$ be a proper inclusion of Poisson $\HH$-prime ideals of $R$. Then $P_0 := P \cap A$ is a $\de$-stable Poisson $\HH$-prime ideal of $A$ (Lemma \ref{PHcontract}(a)), and we may replace $A$, $R$, $P$, $Q$ by $A/P_0$, $R/RP_0$, $P/RP_0$, $Q/RP_0$, respectively. Thus, there is no loss of generality in assuming that $P \cap A = 0$.

The hypothesis of Poisson-normal $\HH$-separation now implies that every nonzero Poisson $\HH$-prime ideal of $A$ contains a Poisson-normal $\HH$-eigenvector of $A$. Therefore, by Proposition \ref{Pnormalsep1}, there exists a Poisson-normal $\HH$-eigenvector $u$ of $R/P$ such that $u \in Q/P$. This verifies Poisson-normal $\HH$-separation in $\HPspec R$.
\end{proof}

\sectionnew{Proof of the Main Theorem}

The Main Theorem \ref{mainthm} follows easily from Theorems \ref{PnormalHsep.AtoR} and \ref{HPnormsep>Pnormsep} together with Corollary \ref{affine+Pnormsep>Pcat}. In fact, these results yield Poisson catenarity for a somewhat larger class of algebras, as we now show. The following lemma will line up an inductive step.

\begin{lemma}  \label{lifttoPCauchon}
Let $R$ be a Poisson $\KK$-algebra, equipped with a rational Poisson action of an algebraic $\KK$-torus $\HH$, and let $A$ be an $\HH$-stable Poisson subalgebra of $R$. Assume that there exist an $\HH$-eigenvector $x \in R$ and an element $h_\circ \in \hfrak := \Lie \HH$ such that
\begin{enumerate}
\item[\rm(i)] The rule $\de(a) = \{ x,a \} - (h_\circ \cdot a) x$ defines a map $\de$ from $A$ to itself.
\item[\rm(ii)] $\de$ is locally nilpotent.
\item[\rm(iii)] The $h_\circ$-eigenvalue of $x$ is nonzero.
\end{enumerate}
Then there exists a Poisson-Cauchon extension $R^+ = A[X;(h_\circ \cdot)|_A,\de]_p$ relative to the same torus $\HH$, and the identity map on $A$ extends to an $\HH$-equivariant Poisson algebra homomorphism $\phi : R^+ \rightarrow R$ such that $\phi(X) = x$.
\end{lemma}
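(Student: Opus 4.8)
The plan is to build $R^+$ directly as a Poisson-Ore extension, transport the $\HH$-action to it, and then define $\phi$ by the universal property of a polynomial algebra. Set $\sig := (h_\circ\cdot)|_A$. To know that $R^+ := A[X;\sig,\de]_p$ is a legitimate Poisson polynomial algebra, it suffices, by Definition \ref{Ppoly} (the forward direction of \cite[Theorem 1.1]{Oh}), to check that $\sig$ is a Poisson derivation of $A$ and $\de$ is a Poisson $\sig$-derivation of $A$. The former is immediate: $A$ is $\HH$-stable, hence $\hfrak$-stable, and $\hfrak$ acts on $A$ by Poisson derivations (Proposition \ref{h.action}(a)), so $\sig = (h_\circ\cdot)|_A$ is one. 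For $\de$ I would work inside $R$, where hypothesis (i) reads $\de(a) = \{x,a\} - \sig(a)x$. Expanding $\de(ab)$ with the Leibniz rules for $\{x,-\}$ and for $\sig$ shows $\de$ is a derivation, and expanding $\de(\{a,b\})$ via the Jacobi identity applied to $\{x,\{a,b\}\}$ together with $\sig(\{a,b\}) = \{\sig(a),b\}+\{a,\sig(b)\}$ produces, after cancellation, exactly condition \eqref{delta.cond}. All the elements $\sig(a),\de(a)$ lie in $A$ by hypothesis (i), so these are genuine identities in $A$.

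Next I would extend the $\HH$-action on $A$ to $R^+ = A[X]$ by declaring $X$ to be an $\HH$-eigenvector with $\chi_X := \chi_x$. This action is rational, since $A$ is spanned by $\HH$-eigenvectors, hence so is $R^+ = \bigoplus_n AX^n$, with eigenvalues the morphisms $\chi_a\chi_x^{\,n}$. Because the $\HH$-action on $A$ is already Poisson and $\{X,X\}=0$, checking that the extended action is Poisson amounts to verifying $h\cdot\{X,a\} = \{h\cdot X,\,h\cdot a\}$ for $a\in A$; comparing the coefficients of $X$ and of $X^0$ reduces this to $\sig(h\cdot a) = h\cdot\sig(a)$ and $\de(h\cdot a) = \chi_X(h)^{-1}\,h\cdot\de(a)$. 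The first holds because the $\hfrak$-action commutes with the $\HH$-action (Proposition \ref{h.action}(b)), and the second is again checked in $R$ from $\de(a)=\{x,a\}-\sig(a)x$, using $h\cdot x = \chi_x(h)x$. The three Poisson-Cauchon conditions of \S\ref{pCauchon.assump} then follow: $A$ is $\HH$-stable and $X$ is an eigenvector by construction; $\de$ is locally nilpotent by hypothesis (ii); and $(h_\circ\cdot)|_A = \sig$ with the $h_\circ$-eigenvalue of $X$ nonzero, the last point because $X$ and $x$ carry the same $\HH$-eigenvalue $\chi_x$ and hence, by Proposition \ref{h.action}(c), the same $\hfrak$-eigenvalue, which is nonzero on $h_\circ$ by hypothesis (iii).

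Finally I would construct $\phi$. By the universal property of the polynomial algebra $R^+ = A[X]$, the inclusion $A\hookrightarrow R$ together with the assignment $X\mapsto x$ extends uniquely to a $\KK$-algebra homomorphism $\phi:R^+\to R$. The maps $(u,v)\mapsto\phi(\{u,v\})$ and $(u,v)\mapsto\{\phi(u),\phi(v)\}$ are both biderivations on $R^+$, so they coincide as soon as they agree on the generating set $A\cup\{X\}$. On $A\times A$ this is the statement that $A$ is a Poisson subalgebra of $R$, and for $a\in A$ we have $\phi(\{X,a\}) = \sig(a)x+\de(a) = \{x,a\} = \{\phi(X),\phi(a)\}$, the middle equality being precisely the defining formula for $\de$. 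Hence $\phi$ is a Poisson homomorphism. It is $\HH$-equivariant because $\phi|_A = \id$ is equivariant and $\phi(X) = x$ with $\chi_X=\chi_x$, so $\phi(h\cdot(aX^n)) = (h\cdot a)\chi_x(h)^n x^n = h\cdot\phi(aX^n)$; by Proposition \ref{h.action}(d) it is then automatically $\hfrak$-equivariant as well. The only genuine computation is the verification of \eqref{delta.cond} and of its companion $\de(h\cdot a) = \chi_X(h)^{-1}h\cdot\de(a)$, and both become routine once one agrees to compute inside $R$ and to use that $\{x,-\}$ satisfies the Jacobi identity while $\sig$ is a Poisson derivation; everything else is bookkeeping with the universal property and the eigenvalue matching $\chi_X=\chi_x$.
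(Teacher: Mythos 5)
Your proposal is correct and follows essentially the same route as the paper: build $R^+$ via Oh's theorem after checking that $\sig=(h_\circ\cdot)|_A$ is a Poisson derivation and $\de$ a Poisson $\sig$-derivation, extend the $\HH$-action with $\chi_X=\chi_x$ and verify it is Poisson by the same coefficient comparison using Proposition \ref{h.action}(b), and obtain $\phi$ and the eigenvalue matching via Proposition \ref{h.action}(c)(d). The only difference is cosmetic: where you verify condition \eqref{delta.cond} by a direct Jacobi-identity computation inside $R$, the paper cites the proof of \cite[Proposition 1.8]{KGSL} for that step.
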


\begin{proof} Recall (Proposition \ref{h.action}(a)) that $\sig := (h_\circ \cdot)|_A$ is a Poisson derivation on $A$. Since also $\{x, - \}$ is a derivation, the map $\de$ is a derivation on $A$. As shown in the proof of \cite[Proposition 1.8]{KGSL}, $\de$ satisfies condition \eqref{delta.cond}, and thus there exists a Poisson polynomial ring $R^+ = A[X; \sig ,\de]_p$. By construction, the identity map on $A$ extends to a Poisson algebra homomorphism $\phi : R^+ \rightarrow R$ such that $\phi(X) = x$.

The action of $\HH$ on $A$ extends to a rational action of $\HH$ on $R^+$ -- by $\KK$-algebra automorphisms, at least -- such that $X$ is an $\HH$-eigenvector with the same $\HH$-eigenvalue as $x$. With respect to this action, $\phi$ is $\HH$-equivariant. For $\al \in \HH$ and $a\in A$, we compute that
\begin{align*}
\al\cdot \{ X,a \} &= \al \cdot \bigl(\sig(a) X + \de(a) \bigr) = \al \cdot \bigl( (h_\circ\cdot a) X + \{ x,a \} - (h_\circ \cdot a)x  \bigr)  \\
&= \bigl( h_\circ \cdot (\al \cdot a) \bigr) \chi_{x}(\al) X + \{ \chi_{x}(\al) x,\, \al \cdot a \} - \bigl( h_\circ \cdot (\al \cdot a) \bigr) \bigl( \chi_{x}(\al) x \bigr)  \\
&= \chi_{x}(\al) \bigl[ \sig(\al \cdot a) X + \de( \al \cdot a) \bigr] = \chi_{x}(\al) \{ X,\, \al \cdot a \} = \{ \al \cdot X ,\, \al \cdot a \},
\end{align*}
where we have used Proposition \ref{h.action}(b).
Since also $\al \cdot \{ a,b \} = \{ \al \cdot a ,\, \al \cdot b \}$ for all $a,b \in A$, it follows that $\al$ preserves the Poisson bracket on $R^+$.
Therefore the action of $\HH$ on $R^+$ is a Poisson action.

By assumption (iii), $h_\circ \cdot x = \la_\circ x$ for some nonzero $\la_\circ \in \KK$. Since $X$ and $x$ are $\HH$-eigenvectors with the same $\HH$-eigenvalue, they are also $\hfrak$-eigenvectors with the same $\hfrak$-eigenvalue, by Proposition \ref{h.action}(c)(d). In particular, $h_\circ \cdot X = \la_\circ X$. 

Therefore $R^+$ is a Poisson-Cauchon extension relative to $\HH$.
\end{proof}

\begin{theorem}  \label{genPnilp}
Let $R$ be a Poisson $\KK$-algebra, equipped with a rational Poisson action of an algebraic $\KK$-torus $\HH$. Assume that $R$ is generated {\rm(}as a $\KK$-algebra{\rm)} by $\HH$-eigenvectors
$x_1,\dots,x_N$, and that there exist $h_1,\dots,h_N \in \hfrak := \Lie \HH$
such that
\begin{enumerate}
\item[\rm(i)] $\{x_i,x_j\}- (h_i \cdot x_j)x_i \in \KK\langle x_1,\dots,x_{i-1} \rangle$
for $N \ge i >j \ge 1$.
\item[\rm(ii)] For all $i \in \gc2,N\dc$, the map $a \mapsto \{ x_i,a \} - (h_i \cdot a) x_i$ on $\KK\langle x_1,\dots,x_{i-1} \rangle$ is locally nilpotent.
\item[\rm(iii)] For all $i \in \gc1,N\dc$, the $h_i$-eigenvalue of $x_i$ is nonzero.
\end{enumerate}
Then $\Pspec R$ has Poisson-normal separation and is catenary.
\end{theorem}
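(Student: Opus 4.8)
The plan is to prove Theorem \ref{genPnilp} by induction on $N$, building up the algebra $R$ one generator at a time and applying Theorem \ref{PnormalHsep.AtoR} at each step to propagate Poisson-normal $\HH$-separation. The key observation is that the hypotheses (i)--(iii) are precisely engineered so that each subalgebra $R_{i} := \KK\langle x_1,\dots,x_i\rangle$ sits inside $R_{i+1}$ as the coefficient ring of a Poisson-Cauchon extension, via Lemma \ref{lifttoPCauchon}. However, there is a subtlety: condition (i) only guarantees $\{x_i,x_j\}-(h_i\cdot x_j)x_i \in R_{i-1}$ for the generators $x_j$ with $j<i$, whereas Lemma \ref{lifttoPCauchon}(i) requires that $\de_i(a) := \{x_i,a\}-(h_i\cdot a)x_i$ land in $R_{i-1}$ for \emph{all} $a\in R_{i-1}$. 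So the first technical step is to verify, for each $i$, that $\de_i$ does map $R_{i-1}$ into itself; this follows because $\{x_i,-\}$ and $(h_i\cdot)$ are both derivations of $R$, so $\de_i$ is a derivation, and a derivation that sends each algebra generator $x_1,\dots,x_{i-1}$ into $R_{i-1}$ automatically sends all of $R_{i-1}$ into $R_{i-1}$.

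Once that is in place, I would set up the induction cleanly. First I would observe that $R_1 = \KK[x_1]$ (or $\KK$ if $x_1$ is a unit eigenvector situation), for which $\HPspec R_1$ trivially has Poisson-normal $\HH$-separation, since the only proper inclusion of Poisson $\HH$-primes is from $0$ upward and one may take $x_1$ itself or handle the base case directly. For the inductive step, assuming $\HPspec R_{i-1}$ has Poisson-normal $\HH$-separation, I would apply Lemma \ref{lifttoPCauchon} with $A = R_{i-1}$, $x = x_i$, $h_\circ = h_i$: its hypotheses are exactly (i)--(iii) of Theorem \ref{genPnilp} restricted to level $i$ (together with the derivation check above), yielding a Poisson-Cauchon extension $R_{i-1}[X;(h_i\cdot)|_{R_{i-1}},\de_i]_p$ mapping $\HH$-equivariantly onto $R_i$ via $\phi$ with $\phi(X)=x_i$. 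The point is that this surjective map is in fact an isomorphism, because $R_i$ is a polynomial ring over $R_{i-1}$ in the variable $x_i$ (the generators are algebraically independent in the iterated-polynomial structure), so $\phi$ is injective as well; hence $R_i \cong R_{i-1}[X;\sig_i,\de_i]_p$ is itself a Poisson-Cauchon extension with noetherian domain coefficient ring. Theorem \ref{PnormalHsep.AtoR} then transfers Poisson-normal $\HH$-separation from $\HPspec R_{i-1}$ to $\HPspec R_i$, completing the induction and giving Poisson-normal $\HH$-separation for $\HPspec R = \HPspec R_N$.

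Finally, to conclude, I would chain together the three general results. Poisson-normal $\HH$-separation of $\HPspec R$, via Theorem \ref{HPnormsep>Pnormsep}, yields Poisson-normal separation of $\Pspec R$; and since $R$ is affine (being generated by $x_1,\dots,x_N$), Corollary \ref{affine+Pnormsep>Pcat} then gives that $\Pspec R$ is catenary. This delivers both conclusions of the theorem at once.

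The main obstacle I anticipate is the careful verification at the inductive step that $R_i$ genuinely carries the structure of a Poisson-Cauchon extension over $R_{i-1}$ with $R_{i-1}$ a \emph{noetherian domain} --- the standing assumption under which Theorem \ref{PnormalHsep.AtoR} was proved. That $R_{i-1}$ is noetherian follows from its being an affine commutative $\KK$-algebra; that it is a domain requires knowing the iterated construction produces polynomial rings with no zero divisors, which must be argued (e.g.\ by showing inductively that each $R_i$ is a polynomial algebra $R_{i-1}[x_i]$, using the locally nilpotent derivation $\de_i$ and the Poisson-Ore structure to rule out any algebraic relation collapsing the degree in $x_i$). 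Confirming both the domain property and that $\phi$ is an isomorphism --- rather than merely a surjection --- is the delicate part that makes the clean application of the earlier machinery legitimate.
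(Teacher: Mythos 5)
The overall architecture you propose --- induction on the number of generators, Lemma \ref{lifttoPCauchon} to produce a Poisson-Cauchon extension, Theorem \ref{PnormalHsep.AtoR} to propagate Poisson-normal $\HH$-separation up one step, then Theorem \ref{HPnormsep>Pnormsep} and Corollary \ref{affine+Pnormsep>Pcat} to conclude --- is exactly the paper's. But there is a genuine gap at the inductive step: you claim the surjection $\phi : R_{i-1}[X;\sigma_i,\delta_i]_p \rightarrow R_i$ is an isomorphism ``because $R_i$ is a polynomial ring over $R_{i-1}$ in the variable $x_i$'' with the generators algebraically independent. The theorem does not assume this. It only assumes that $R$ is \emph{generated} by $x_1,\dots,x_N$; the point of stating it this way, rather than for iterated Poisson polynomial rings, is precisely to cover a strictly larger class, including homomorphic images in which the generators satisfy relations. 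Your closing paragraph concedes that the domain property of $R_{i-1}$ and the injectivity of $\phi$ ``must be argued,'' but they cannot be: both are simply false for a general $R$ satisfying (i)--(iii), and even when $R$ is a domain the generator $x_i$ may be algebraic over $R_{i-1}$.

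The paper's proof sidesteps this in two ways, and your argument needs both. First, since Poisson-normal $\HH$-separation for $\HPspec R$ only requires, for each $P \in \HPspec R$, the existence of suitable elements in the factor $R/P$, one may replace $R$ by $R/P$ --- which again satisfies (i)--(iii), with $\HH$ still acting rationally, and which is a domain by Lemma \ref{torusPprime} --- and thereby assume $R$ is a domain. This is what makes $A = \KK\langle x_1,\dots,x_{N-1}\rangle$ a noetherian domain, the standing hypothesis under which Theorem \ref{PnormalHsep.AtoR} was proved. Second, and crucially, injectivity of $\phi$ is never needed: $\phi$ is a surjective $\HH$-equivariant Poisson algebra homomorphism, so $\Ker\phi$ is an $\HH$-stable Poisson ideal, Poisson $\HH$-primes of $R$ correspond to Poisson $\HH$-primes of $R^+$ containing $\Ker\phi$ with matching factor algebras, and hence Poisson-normal $\HH$-separation descends from $\HPspec R^+$ to $\HPspec R$ along $\phi$. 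Replacing your isomorphism claim with this descent argument, together with the initial reduction to domains, repairs the proof; your preliminary check that $\delta_i$ maps $R_{i-1}$ into itself is correct and matches the remark in the paper.
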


\begin{remark*}  \label{dei}
For $i \in \gc2,N\dc$, the map $\de_i := \bigl( a \mapsto \{ x_i,a \} - (h_i \cdot a) x_i \bigr)$ in hypothesis (ii) is a derivation from $\KK\langle x_1,\dots,x_{i-1} \rangle$ to $R$, because $\{ x_i, - \}$ and $(h_i\cdot)$ are derivations. In view of hypothesis (i), it follows that $\de_i$ maps $\KK\langle x_1,\dots,x_{i-1} \rangle$ to itself. Thus, hypothesis (ii) says that $\de_i$ is a locally nilpotent derivation on $\KK\langle x_1,\dots,x_{i-1} \rangle$.
\end{remark*}

\begin{proof} We show that $\HPspec R$ has Poisson-normal $\HH$-separation. Once that is established, Theorem \ref{HPnormsep>Pnormsep} will imply that $\Pspec R$ has Poisson-normal separation, and then Corollary \ref{affine+Pnormsep>Pcat} will imply that $\Pspec R$ is catenary.

It suffices to show that Poisson-normal $\HH$-separation holds in $\HPspec(R/P)$ for any factor algebra $R/P$ where $P \in \HPspec R$. These factor algebras are also Poisson $\KK$-algebras, $\HH$ acts rationally on them by Poisson automorphisms, and they are generated by $\HH$-eigenvectors satisfying conditions corresponding to (i)--(iii). Thus, there is no loss of generality in assuming that $R$ is a domain.

We now prove Poisson-normal $\HH$-separation for $\HPspec R$ by induction on $N$, starting with the case $N=0$. In that case, $R=\KK$ and the condition holds trivially.
Now let $N>0$ and assume that $\HPspec$ of the algebra $A := \KK \langle x_1,\dots,x_{N-1} \rangle$ has Poisson-normal $\HH$-separation. Note that $A$ is stable under the $\HH$-action and hence also under the $\hfrak$-action.

In view of Lemma \ref{lifttoPCauchon}, there exist a Poisson-Cauchon extension 
$R^+ := A[X; \sig, \de]_p$ relative to $\HH$ and a surjective $\HH$-equivariant Poisson algebra homomorphism $\phi : R^+ \rightarrow R$.  

Since $A$ is a noetherian domain and $\HPspec A$ has Poisson-normal $\HH$-separation, Theorem \ref{PnormalHsep.AtoR} implies that $\HPspec R^+$  has Poisson-normal $\HH$-separation. Therefore $\HPspec R$ has Poisson-normal $\HH$-separation, completing the induction step.
\end{proof}



\end{document}